\newtheorem{theorem}{Theorem}[section]
\newtheorem{lemma}[theorem]{Lemma}
\theoremstyle{definition}
\newtheorem{remark}{Remark}
\newtheorem*{lemma*}{Lemma}
\newcommand{\myfig}[3][]{
 \begin{figure}
 \begin{center}
 {\mbox{\includegraphics[#1]{#2.pdf}}}
 \end{center}
 \caption{\label{#2}#3}
 \end{figure}}
\def\vc#1{\mathbf #1}
\def\O{{\mathcal O}}
\def\F{{\mathcal F}}
\long\def\ignore#1{}
\def\tt{\theta}
\def\cc{\gamma}
\def\H{{\mathcal H}}
\def\nd{\noindent}
\def\fref#1{Figure~\ref{#1}}
\def\ss{\sigma}
 \providecommand{\Pic}{\mathop{\rm Pic}\nolimits}
\def\A{{\mathcal A}}
\def\Bbb#1{{\mathbb #1}}
\def\E{{\mathcal E}}
\def\CC{{\Gamma}}
\def\GG{{\Gamma}}
\def\LL{\Lambda}
\def\K{{\mathcal K}}
\def\L{{\mathcal L}}
\def\proj{{\rm{proj}}}
\def\H{{\mathcal H}}
\def\Ap{{\rm Ap}}
\def\myspan{{\rm span}}
\renewcommand{\odot}{\mathbin{\mathchoice
  {\xcirc\scriptstyle}
  {\xcirc\scriptstyle}
  {\xcirc\scriptscriptstyle}
  {\xcirc\scriptscriptstyle}
}}
\newcommand{\xcirc}[1]{\vcenter{\hbox{$#1\circ$}}}
\def\Viete{Vi\`ete }
\def\P{{\mathcal P}}
\begin{document}

\title{Apollonian packings in seven and eight dimensions}
\author{Arthur Baragar}
\begin{abstract}
In an earlier work, we proposed a generalization for the Apollonian packing in arbitrary dimensions and showed that the resulting object in four, five, and six dimensions have properties consistent with the Apollonian circle and sphere packings in two and three dimensions.  In this work, we investigate the generalization in seven and eight dimensions and show that they too have many of the properties of those in lower dimensions.  In particular, the hyperspheres are tangent or do not intersect; they fill the hyperspace; the object includes a maximal cluster of mutually tangent hyperspheres; and there exists a perspective where all hyperspheres in the object have integer curvatures.
\end{abstract}
\subjclass[2010]{52C26, 52C17, 20H15, 14J28, 11H31, 06B99} \keywords{Apollonius, Apollonian, circle packing, sphere packing, hexlet, Soddy, K3 surface, ample cone, lattice, crystalography}
\address{Department of Mathematical Sciences, University of Nevada, Las Vegas, NV 89154-4020}
\email{baragar@unlv.nevada.edu}
\thanks{\nd \LaTeX ed \today.}

\maketitle

\section{Introduction}

In an earlier work \cite{Bar18}, we proposed an alternative definition for the Apollonian circle and sphere packings.  That definition  (see Section \ref{ss2.4}) generates the Apollonian circle and sphere packings in two and three dimensions, extends to any dimension, and in dimensions 4, 5, and 6, defines a configuration of hyperspheres with the following properties:
\begin{enumerate}[\indent (a)]
\item   The hyperspheres do not intersect or intersect tangentially.
\item  The hyperspheres fill $\Bbb R^N$.
\item  The configuration includes a cluster of $N+2$ mutually tangent hyperspheres.
\item  Every hypersphere in the configuration is a member of a cluster of $N+2$ mutually tangent hyperspheres.
\item  If every hypersphere in a cluster of $N+2$ mutually tangent hyperspheres has integer curvature, then every hypersphere in the configuration has integer curvature.  
\end{enumerate}
In this paper, we show that the configurations in dimensions 7 and 8 also have Properties (a) through (e).  The generalizations in all dimensions have Properties (c) and (e) \cite{Bar18}, so our main result is to demonstrate Properties (a), (b) and (d).   Let us call a configuration of hyperspheres that satisfies Properties (a) and (b) a {\it packing} or {\it Apollonian-like packing}.    We have long known of the existence of hypersphere packings in many dimensions (see e.g. \cite{Boy74}), but it was once believed that for dimension $N\geq 4$, none of them include clusters of $N+2$ mutually tangent spheres (see the Mathematical Review for \cite{Boy74}, and \cite[p.~356, last paragraph]{LMW02}).         

The above statements are a bit imprecise, so let us clarify.  Associated to each circle (or sphere, hypersphere) is a side  -- the inside or outside.  If that side were always the inside, then we would call them discs or balls, and some authors favor that nomenclature.   The {\it curvature}\footnote{Some authors prefer the term {\it bend}, since in higher dimensions, the usual definition of curvature is the inverse of the square of the radius.} of a circle (or sphere, hypersphere) is the inverse of its radius, together with a sign that is positive if the associated side is the inside, and negative otherwise.  So, for example, the outside circle in \fref{fig1} (left) has negative curvature, while all the rest have positive curvature.  By fill $\Bbb R^N$ we mean that there is no space left where we can place a hypersphere so that it does not intersect any hypersphere (or its associated side) in the packing.  So, for example, the outside circle in \fref{fig1} covers most of the plane, including the point at infinity, while the rest cover most of the remainder.  Note too that if a circle lies inside another but does not intersect it (as in the context of Property (a) above), then the curvature of the outside circle must be negative.  That is, two nested circles with curvatures of the same sign are said to intersect, since their associated sides have overlap.  This convention (or something like it) is also necessary in the formulation of Descartes' Theorem (see Theorem \ref{tDescartes} below).  When the curvature is zero, the circle is a line (or plane, hyperplane).  In the strip version of the Apollonian circle packing (see \fref{fig1}, right), it is clear which sides we wish to associate to the two lines, but the curvature is not informative in this case.  In the next section, we will give a different interpretation that clarifies this ambiguity.       

To show our main result, we first streamline the argument in dimensions 4, 5 and 6.  In passing, we introduce groups that are generated by reflections (unlike the descriptions given in \cite{Bar18}), so have Coxeter graphs.  These are shown in Section~\ref{sCoxeter}.  

\section{Background}

\subsection{The Apollonian circle packing}  \label{s2.1} To generate the Apollonian circle packing, we begin with four mutually tangent circles $\vc e_1$, ..., $\vc e_4$.  In each of the resulting curvilinear triangles (one of which might include the point at infinity), we inscribe a circle that is tangent to all three sides, thereby creating new curvilinear triangles.  We repeat this process indefinitely (see \fref{fig1}).  

\myfig[width=\textwidth]{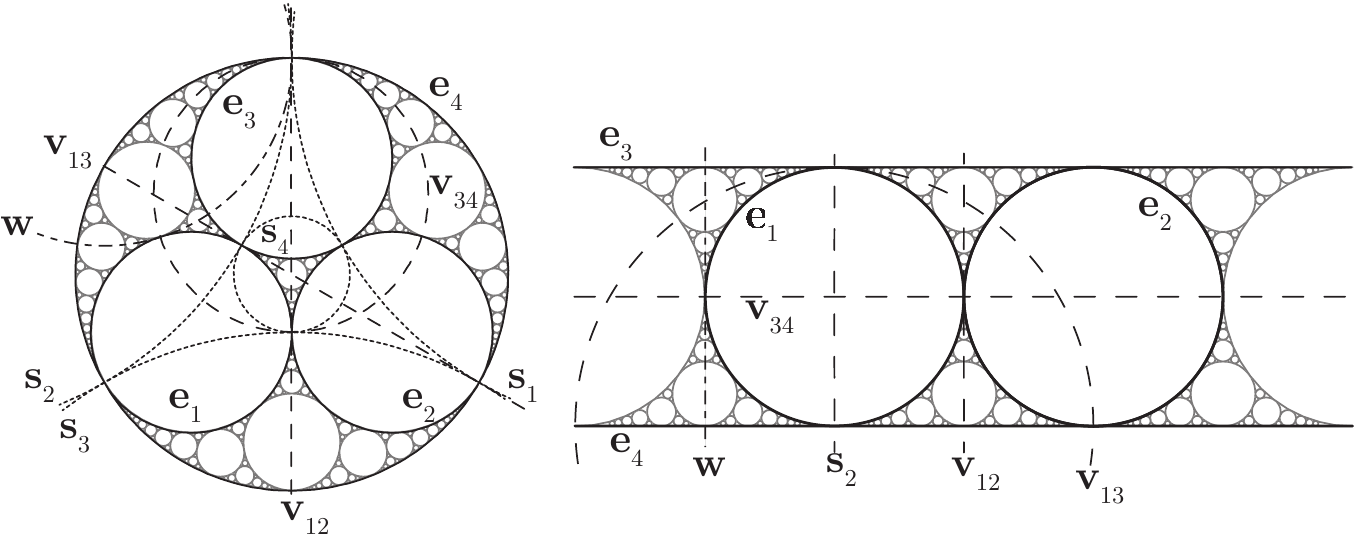}{Two versions of the Apollonian circle packing.  The one on the right is the {\it strip packing}.  The dotted and dashed lines represent symmetries of the packing.}

The process can be interpreted algebraically.  Let us select three of the initial four circles, say $\vc e_1, \vc e_2, \vc e_3$.  They form two curvilinear triangles, one of which contains $\vc e_4$.  They also have three points of tangency about which we can circumscribe a circle $\vc s_4$.   Let $R_{\vc n}$ denote inversion in the circle $\vc n$.  Note that the three circles $\vc e_i$ for $i\neq 4$ are fixed by $R_{\vc s_4}$, since $\vc s_4$ intersects $\vc e_i$ perpendicularly for $i\neq 4$.  Since $\vc e_4$ is tangent to the other three circles, its image must also be tangent to those three circles.  Thus, the image of $\vc e_4$ under the inversion in $\vc s_4$ is the incircle of the other curvilinear triangle.  We can similarly define $\vc s_1$, $\vc s_2$, and $\vc s_3$, giving us the {\it Apollonian group}
\[
\CC_{\Ap}=\langle R_{\vc s_1}, R_{\vc s_2}, R_{\vc s_3}, R_{\vc s_4}\rangle.
\]
The Apollonian packing is the image of the initial four circles $\vc e_1$,...,$\vc e_4$ under the action of $\CC_{\Ap}$.   Let us call the four inversions $R_{\vc s_i}$ {\it Vi\`ete involutions}.  As we will see, they fix three variables in a quadratic in four variables and send the fourth to its other root.  This can sort of be seen using Descartes' Theorem:

\begin{theorem}[Descartes] \label{tDescartes}  Suppose four mutually tangent circles have curvatures $k_1$,..., $k_4$ (with appropriate signs so that the circles do not intersect).  Then
\[
(k_1+k_2+k_3+k_4)^2=2(k_1^2+k_2^2+k_3^2+k_4^2).
\]
\end{theorem}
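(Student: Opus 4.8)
The plan is to linearize the tangency conditions by assigning to each circle a vector in a four-dimensional space carrying a quadratic form of Lorentzian type, so that Descartes' relation becomes a short computation in linear algebra. Concretely, to a circle with center $\mathbf c=(x,y)$, radius $r$, and signed curvature $k=\pm 1/r$, I would associate its augmented curvature--center coordinates $\mathbf v=(k,\bar k,kx,ky)$, where the co-curvature $\bar k=k(x^2+y^2)-1/k$ is the (signed) curvature of the image of the circle under inversion in the unit circle. A direct computation gives $(kx)^2+(ky)^2-k\bar k=1$, so every circle lands on the quadric $Q(\mathbf v)=1$, where $Q$ is the form attached to the symmetric pairing $\langle \mathbf u,\mathbf v\rangle=u_3v_3+u_4v_4-\tfrac12(u_1v_2+u_2v_1)$ of signature $(3,1)$.

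First I would establish the key identity: two circles are tangent -- with the sign conventions described above, so that nested circles of equal-sign curvature count as intersecting -- if and only if $\langle \mathbf u,\mathbf v\rangle=-1$. This reduces to the single computation $\langle \mathbf u,\mathbf v\rangle=-\tfrac{kk'}{2}\,|\mathbf c-\mathbf c'|^2+\tfrac12(k/k'+k'/k)$, after which the unified tangency condition $|\mathbf c-\mathbf c'|^2=(1/k+1/k')^2$ makes the right-hand side collapse to $-1$. I expect this bilinear tangency identity to be the main obstacle: one must check that $|\mathbf c-\mathbf c'|^2=(1/k+1/k')^2$ really does capture both external tangency and internal tangency (where the outer curvature is negative) once the curvatures are signed, which is exactly the subtlety the preceding discussion of curvature sign is setting up.

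With that in hand, for four mutually tangent circles $C_1,\dots,C_4$ the Gram matrix of their vectors $\mathbf v_1,\dots,\mathbf v_4$ is $G=2I-J$, where $J$ is the $4\times4$ all-ones matrix, since the diagonal entries are $Q(\mathbf v_i)=1$ and the off-diagonal entries are the tangency value $-1$. Because $\det G=-16\neq0$, the four vectors form a basis of the ambient space. The curvature of any circle is recovered as a linear functional, $k=\langle \mathbf f,\mathbf v\rangle$ with the null vector $\mathbf f=(0,-2,0,0)$ satisfying $Q(\mathbf f)=0$; this $\mathbf f$ represents the point at infinity, the degenerate circle of zero curvature.

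To finish, I would expand $\mathbf f$ in the basis, $\mathbf f=\sum_i\lambda_i\mathbf v_i$. Pairing with each $\mathbf v_i$ gives $\mathbf k=G\boldsymbol{\lambda}$ for the curvature vector $\mathbf k=(k_1,k_2,k_3,k_4)^{T}$, while $0=Q(\mathbf f)=\boldsymbol{\lambda}^{T}G\boldsymbol{\lambda}=\mathbf k^{T}G^{-1}\mathbf k$. Since $J^2=4J$ one checks $G^{-1}=\tfrac12I-\tfrac14J$, so $0=\mathbf k^{T}G^{-1}\mathbf k=\tfrac12\sum_i k_i^2-\tfrac14\big(\sum_i k_i\big)^2$, which is Descartes' relation after multiplying by $4$. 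An alternative, more classical route is to invert at the point of tangency of two of the circles, turning them into parallel lines and the other two into congruent circles squeezed between them, where the identity is immediate; but that approach forces one to prove separately that $2\sum_i k_i^2-(\sum_i k_i)^2$ is an inversive invariant, which is no easier than the Gram-matrix computation above.
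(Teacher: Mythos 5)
Your argument is correct, and it is essentially the approach the paper itself takes: the paper states Descartes' Theorem without proof as a classical fact, but its Lorentz-space representation of circles and the generalized Descartes relation $\vc k^t J^{-1}\vc k=0$ of Remark 4 (attributed to Boyd) are exactly your Gram-matrix computation, specialized to the Gram matrix with $1$'s on the diagonal and $-1$'s off it. Your verification of the signed tangency identity $\langle \mathbf u,\mathbf v\rangle=-1$ and the inverse $G^{-1}=\tfrac12 I-\tfrac14 J$ are both sound, so nothing is missing.
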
  

If the curvatures are of the four initial circles, and a fifth is generated by one of the \Viete involutions, then its curvature is the other root of the quadratic generated by fixing three of the curvatures in Descartes' Theorem.  

The Apollonian sphere packing (or Soddy packing) can be generated by mimicking the above algebraic interpretation:  Begin with five mutually tangent spheres and define the five \Viete involutions as before.  That is, fix four of the five spheres and define a \Viete involution to be inversion in the sphere that is perpendicular to all four spheres (such a sphere exists).  The five \Viete involutions generate a group and the image of the initial five spheres under the action of this group is the Apollonian sphere packing.  

In four dimensions, using \Viete involutions leads to overlapping hyperspheres, as was noticed by Boyd \cite{Boy74}.   

The \Viete involutions are symmetries of the Apollonian circle packing, but they are not the only ones.  There are a class of symmetries we call {\it transpositions} that switch two of the original circles and fix the other two.  Geometrically, they are inversion in a circle $\vc v_{ij}$ that is tangent to both $\vc e_i$ and $\vc e_j$, and is perpendicular to the other two (see \fref{fig1}).  Their action on curvatures in Descartes' Theorem is to switch $k_i$ and $k_j$.  The group generated by the transpositions is the symmetric group $S_4$, so is finite.    

The full group of symmetries of the Apollonian circle packing is 
\[
\CC=\langle R_{\vc v_{12}}, R_{\vc v_{34}}, R_{\vc v_{14}}, R_{\vc s_2}\rangle,
\]
and the packing is the image of $\vc e_4$ under the action of $\CC$.   The full group must, of course, contain a \Viete involution, but it is possible to choose a subgroup that contains no \Viete involutions yet still generates the packing.  

Let us consider the subgroup 
\[
\CC'=\langle R_{\vc v_{12}}, R_{\vc v_{34}}, R_{\vc v_{14}}, R_{\vc w}\rangle, 
\]
where $\vc w$ is the circle/line shown in \fref{fig1}.  Then $\CC'$ is a subgroup of $\CC$ of index two, and the Apollonian packing is the image of $\vc e_4$ under the action of $\CC'$.  Thus, it is possible to generate the packing using a group that includes no \Viete involutions.  

\subsection{Hyperbolic geometry}  A circle in the Euclidean plane can be thought of as the edge of a hemisphere in the Poincar\'e upper half space model of $\Bbb H^3$, so represents a plane in $\Bbb H^3$.  The Apollonian packing therefore represents an infinite sided {\it ideal} polyhedron in $\Bbb H^3$.  (The polyhedron has no edges, like an ideal triangle in $\Bbb H^2$, which has no vertices.)  Similarly, an  $(N-1)$-sphere in $\Bbb R^N$ represents the edge of an $N$-dimensional hemisphere in the Poincar\'e upper half hyperspace model of $\Bbb H^{N+1}$.  The different versions of the Apollonian packing (e.g. those in \fref{fig1}) can therefore be thought of as the same object in $\Bbb H^3$, but rendered with different points at infinity.  We call them different {\it perspectives}.  One perspective can be obtained from another by inversion in some circle (or sphere, hypersphere).

Associated to an Apollonian packing is a lattice, which is hinted at with Property~(e).  This lattice lies in Lorentz space, in which we can imbed a hyperbolic geometry.

\subsection{The pseudosphere in Lorentz space}  Lorentz space, $\Bbb R^{\rho-1,1}$, is the set of $\rho$-tuples over $\Bbb R$ equipped with the Lorentz product
\[
\vc u\odot \vc v:=u_1v_1+u_2v_2+...+u_{\rho-1}v_{\rho-1}-u_{\rho}v_{\rho}.
\]
The surface $\vc x\odot \vc x=-1$ is a hyperboloid of two sheets.  Let us take the top sheet
\[
\H: \qquad \vc x\odot \vc x=-1, \qquad x_{\rho}>0.
\]
We define the distance $|AB|$ between two points on $\H$ by
\[
\cosh(|AB|)= -A\odot B.
\]
The pseudosphere $\H$ equipped with this metric is a model of $\Bbb H^{\rho-1}$.  (See \cite{Rat06}, or \cite{Bar18} for more details and references.)
   
Hyperplanes on $\H$ are the intersection of $\H$ with hyperplanes in $\Bbb R^{\rho-1,1}$ that go through the origin.  That is, hyperplanes of the form 
$\vc n\odot \vc x=0$ with $\vc n\in \Bbb R^{\rho-1,1}$.  The hyperplane intersects $\H$ if and only if $\vc n\odot \vc n>0$.  Let us denote the hyperplane in $\Bbb R^{\rho -1,1}$ and its intersection with $\H$ by $H_{\vc n}$.  The plane divides $\Bbb R^{\rho-1,1}$ and $\H$ into two halves, which we denote $H_{\vc n}^+$ and $H_{\vc n}^-$, where
\[
H_{\vc n}^+=\{\vc x: \vc n\odot \vc x\geq 0\}.
\]
The angle $\tt$ between two hyperplanes $H_{\vc n}$ and $H_{\vc m}$ that intersect in $\H$ is given by
\begin{equation}\label{eq1}
|\vc n||\vc m|\cos \tt = -\vc n\odot \vc m,
\end{equation}
where $|\vc n|=\sqrt{\vc n\odot \vc n}$, and $\tt$ is the angle in the region $H_{\vc m}^+\cap H_{\vc n}^+$.  


We let
\begin{align*}
\O(\Bbb R)&=\{T\in M_{\rho\times\rho}: T\vc u\odot T\vc v=\vc u\odot \vc v \hbox{ for all $\vc u,\vc v\in \Bbb R^{\rho-1,1}$}\} \\
\O^+(\Bbb R)&=\{T\in \O(\Bbb R): T\H=\H\}.
\end{align*}
Reflection through the plane $H_{\vc n}$ is given by
\begin{equation}\label{eq2}
R_{\vc n}(\vc x)=\vc x-2\proj_{\vc n}(\vc x) \frac{\vc n}{|\vc n|}=\vc x-2\frac{\vc n\odot \vc x}{\vc n\odot \vc n}\vc n,
\end{equation}
and is in $\O^+(\Bbb R)$.  (Our use of the same notation for inversion in the circle $\vc n$ is on purpose.)  
Because all isometries are generated by reflections, the group $\O^+(\Bbb R)$ is the group of isometries of $\H$.  
  
Let $\partial \H$ represent the boundary of $\H$, so $\partial \H$ is a $(\rho-2)$-sphere.  It is represented by $\L^+/\Bbb R^+$ where $\L^+$ is the set
\[
\L^+=\{\vc x\in \Bbb R^{\rho-1,1}: \vc x\odot \vc x=0, x_\rho>0\}.
\]
Given an $E\in \L^+$, the set $\partial \H_E=\partial \H\setminus {E\Bbb R^+}$ is a model of Euclidean space $\Bbb R^{\rho-2}$ using the metric $|AB|_E$ defined by
\[
|AB|_E^2=\frac{-2A\odot B}{(A\odot E)(B\odot E)}.
\]
This is the boundary of the Poincar\'e upper half hyperspace model of $\Bbb H^{\rho-1}$ using $E$ for the point at infinity.  The plane $H_{\vc n}$ represents a $(\rho-3)$-sphere in this model, which we denote with $H_{\vc n, E}$, or sometimes $H_{\vc n}$ if $E$ is understood or is not important, or sometimes just $\vc n$.  The reflection $R_{\vc n}(\vc x)$, when restricted to $\partial H_E$, is inversion in $H_{\vc n, E}$.  If $H_{\vc n}^+$ is the side we wish to associate to the hypersphere $H_{\vc n}$ then the curvature of $H_{\vc n, E}$ is given by
\[
\frac{-\vc n\odot E}{|\vc n|},
\]
using the metric $|\cdot |_E$ \cite{Bar18}.   

\begin{remark}  Boyd's representation of an $(N-1)$-sphere in $\Bbb R^N$ using an $(N+2)$-tuple (see \cite{Boy74}) is the same as our representation.  Boyd calls these vectors {\it polyspherical coordinates}, and attributes them to Clifford and Darboux.   
\end{remark}

\subsection{The formal definition} \label{ss2.4}  Let $\vc e_1$, ..., $\vc e_{\rho}$ be $\rho$ mutually tangent hyperspheres in $\Bbb R^{\rho-2}$.  Let us think of these as vectors in $\Bbb R^{\rho-1,1}$, normalized to have length 1 so $\vc e_i\odot \vc e_i=1$, and oriented so that $H_{\vc e_i}^-$ contains $H_{\vc e_j}$ for $j\neq i$.  Then by the tangency conditions and Equation (\ref{eq1}), $\vc e_i\odot \vc e_j=-1$ for $i\neq j$.  The matrix
\[
J_{\rho}=[\vc e_i\odot \vc e_j]
\]
has $1$'s along the diagonal, and $-1$'s off the diagonal.  It clearly has an eigenvalue of $2$ with multiplicity $\rho-1$, and an eigenvalue of $2-\rho$ with multiplicity one.  Thus $J_\rho$ is non-degenerate, which both proves that it is possible to have $\rho$ mutually tangent hyperspheres in $\Bbb R^{\rho-2}$, and that this is maximal.   We define the lattice
\[
\LL=\LL_{\rho}   =\vc e_1\Bbb Z\oplus \cdots \oplus \vc e_{\rho}\Bbb Z.
\]
We choose $D\in \LL$ so that $D\odot D<0$, $D/||D||\in \H$, and $D\odot \vc n\neq 0$ for any $\vc n\in \LL$ that satisfies $\vc n\odot \vc n=1$.  The choice $D=\vc e_1+...+\vc e_\rho$ works \cite{Bar18}, and we fix this choice through the rest of the paper.  Let 
\[
\E_1=\{\vc n\in \LL: \vc n\odot \vc n=1, \vc n\odot D<0\}
\]
and 
\[
\K_{\rho}=\bigcap _{\vc n\in \E_1}H_{\vc n}^-.
\]
That is, for each $\vc n\in \LL$ with $\vc n\odot \vc n=1$, we consider the half space bounded by $H_{\vc n}$ that contains $D$ and take the intersection of all these half spaces.  Viewed in $\Bbb R^{\rho-1,1}$, this gives us a polyhedral cone with an infinite number of faces.  Viewed as an object in $\H$, it is a polyhedron with an infinite number of sides.  Its intersection with $\partial \H$ is the {\it Apollonian configuration} $\A_{\rho}$.  This is a departure in terminology from \cite{Bar18}, motivated by our definition of a packing given in the introduction.  Our main result is therefore to show that the Apollonian configurations for $\rho=9$ and $10$ are packings (and to show Property (d)).

Not every $\vc n\in \E_1$ gives a face of $\K$, so let us define
\[
\E_1^*=\{\vc n\in \E_1: \hbox{ $H_{\vc n}$ is a face of $\K_{\rho}$}\}. 
\]
Then 
\[
\A_{\rho}=\{H_{\vc n}\subset \H: \vc n\in \E_1^*\}    
\]
Given $E\in \L^+$, the intersection with $\partial \H_E$ gives a {\it perspective},
\[
\A_{\rho,E}=\{H_{\vc n,E}: \vc n\in \E_1^*\}.
\]
The examples in \fref{fig1} are the perspectives $\A_{4,(1,1,1,3+2\sqrt 3)}$ and $\A_{4,(0,0,1,1)}$.

For fixed $\rho$, the object $\A_{\rho}$ exists, but {\it a priori} may not satisfy the properties outlined in the introduction.  In particular, the polyhedron may have edges, which would mean the hyperspheres in $\A_{\rho}$ intersect.  

\begin{remark}  By a result in \cite{Mor84}, for fixed $\rho\leq 10$, there exists a K3 surface $X$ so that the lattice $\LL_{\rho}$ is $\Pic(X)$ and the intersection matrix is $-2J_{\rho}$.  In this context, $D$ should be thought of as an ample divisor, so $\K_{\rho}$ is the ample cone for $X$.  The set $\E_1$ is the set of effective $-2$ divisors in $\Pic(X)$, while $\E_1^*$ is the set of irreducible effective $-2$ divisors.
\end{remark}

\begin{remark}  The set 
\[
\{H_{\vc n}: \vc n\in \E_1\}
\]
is the {\it Apollonian super-packing} described in \cite{GLM06}.
\end{remark}

\begin{remark}  The generalized Descartes Theorem is as follows:  Suppose $\rho$ hyperspheres are represented by linearly independent vectors $\vc e_1$, ..., $\vc e_\rho$ and have curvatures $k_1$, ..., $k_\rho$.  Let $J=[\vc e_i\odot \vc e_j]$ and $\vc k=(k_1,...,k_\rho)$.  Then $\vc k^tJ^{-1}\vc k=0$.  
This was observed by Boyd \cite{Boy74}.  
\end{remark}  

\section{The Apollonian packings in dimensions $N\leq 6$}

Our strategy is to describe (for each $\rho$) a subgroup $G$ of finite index in $\O_{\LL}^+$ and identify a fundamental domain $\F=\F_G$ for $G$.  The fundamental domain will have finite volume and a finite number of sides, one of which will be a face of $\K$.  We identify that face and remove the appropriate generator from our description of $G$ to give us a thin group $\CC$.  Then $\K=\CC \F$.  Because our interests are in relatively high dimensions $\rho=9$ and $10$ (so $N=7$ and $8$), we describe the process in small dimensions first ($\rho=4$ and $5$) and build up from there.  

Throughout, we will consider strip packings with $E=\vc e_{\rho-1}+\vc e_\rho$ as our point at infinity.  Note that $E$ is the point of tangency between $\vc e_{\rho-1}$ and $\vc e_\rho$.  
As in \cite{Bar18}, we first find a fundamental domain $\F_{\{\vc e_{\rho-1},\vc e_{\rho}\} }$ for the stabilizer group $G_{\{\vc e_1,\vc e_2\} }$, extend it to a fundamental domain $\F_E$ for $G_E$, and then to a fundamental domain $\F$ for $G$.  Note that $G_E$ is a group of Euclidean isometries on $\partial \H_E\cong \Bbb R^{\rho-2}$, and $G_{\{\vc e_{\rho-1},\vc e_{\rho}\}}$ is a group of Euclidean isometries in $(\rho-3)$-dimensions on the intersection of $\partial H_E$ with $H_{\vc v_{\rho-1,\rho}}$.  

\subsection{The Apollonian circle packing again ($\rho=4$)}
We add $R_{\vc e_4}$ to $\CC$ to get our group $G$.  Its fundamental domain $\F$ is bounded by the planes $H_{\vc n}\subset \H$ for $\vc n\in \{\vc e_4,\vc s_2, \vc v_{34}, \vc v_{12}, \vc v_{13}\}$ (see \fref{fig2}).  The square highlighted in \fref{fig2} (left) is $\F_E$, and we call the region above it in the Poincar\'e upper half-space model the {\it chimney}.  The circle given by $\vc v_{13}$ represents a hemisphere (the plane $H_{\vc v_{13}}$) that we call the {\it dome}.   The fundamental domain $\F$ is the portion of the chimney above the dome.  It is a polyhedron in $\H$ that intersects $\partial \H$ at the single point $E$, so has finite volume.  Thus, $G$ has finite index in $\O_{\LL}^+$, provided it is in fact a subgroup of $\O_{\LL}^+$.   

\myfig{fig2}{The fundamental domain for $\O_{\LL}^+$ when $\rho=4$: The picture on the left is on the boundary of the Poincar\'e upper half-space model, so represents an object above it, which is shown on the right.  The fundamental domain is the portion of the chimney that is above the dome.}

Note that $\vc e_4\odot \vc e_4=1$, so $R_{\vc e_4}$ has integer entries (see Equation (\ref{eq2})) and hence is in $\O_{\LL}^+$.   The reflections $R_{\vc v_{ij}}$ switch the $i$-th and $j$-th components, which is why we call them transpositions.  For $R_{\vc s_2}$, we note that $\vc s_2\odot \vc e_i=0$ for $i\neq 2$ since $\vc s_2$ is perpendicular to $\vc e_i$ (for $i\neq 2$).  Solving, we find $\vc s_2=(1,-1,1,1)$ (up to scalar multiples) and that $R_{\vc s_2}$ has integer entries.  Thus $G\leq \O_{\LL}^+$.  It is not difficult to show $G=\O_{\LL}^+$ \cite{Bar18}.  The group $\CC$ is generated by reflections across the faces of $\F$ except the face $H_{\vc e_4}$.     

\subsection{The Apollonian/Soddy sphere packing ($\rho=5$)}  
 We select for $G_{\{\vc e_4,\vc e_5\}}$ the fundamental domain $\F_{\{\vc e_4,\vc e_5\}}$ that is the triangle bounded by $\vc v_{12}, \vc v_{23}$ and $\vc u$ shown in \fref{fig3}.  That is, we let $G_{\{\vc e_4,\vc e_5\}}=\langle R_{\vc v_{12}}, R_{\vc v_{23}}, R_{\vc u}\rangle$.  

\myfig[width=\textwidth]{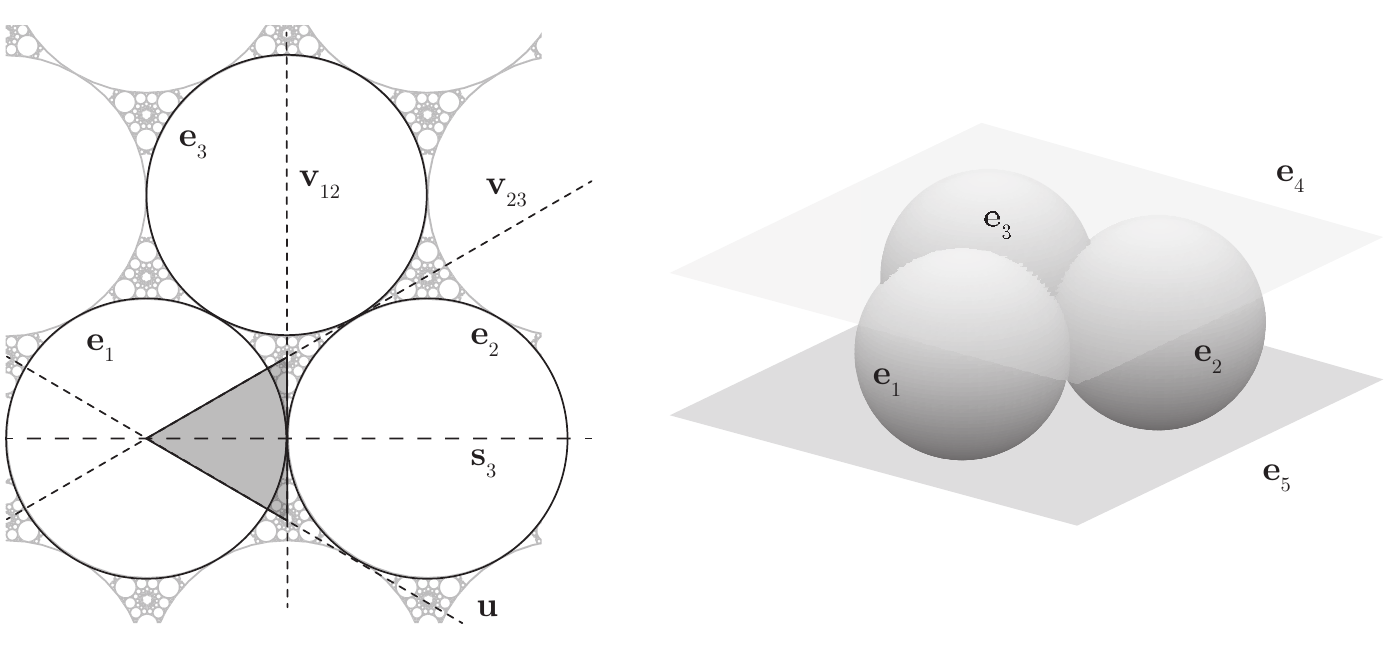}{The cross section on $H_{\vc v_{45}}$ of the strip version of the Apollonian sphere packing with $E=\vc e_4+\vc e_5$ the point at infinity.  Each circle represents a sphere, and the larger spheres are bounded above and below by the planes $H_{\vc e_4}$ and $H_{\vc e_5}$. }    

The \Viete involution $\vc s_3$ shown in \fref{fig3} satisfies $\vc s_3\odot \vc e_i=0$ for $i\neq 3$, and solving we get $\vc s_3=(1,1,-2,1,1)$ up to scalar multiples.  While this is an obvious generalization of the \Viete involution $\vc s_2$ in the $\rho=4$ case, there is a rationale to consider $\vc u$ instead.  We note $\vc u=R_{\vc s_3}(\vc v_{23})=(1,0,-1,1,1)$ and note that $R_{\vc u}\in \O_{\LL}^+$.  In general, we take $\vc u=\vc e_1-\vc e_{\rho-2}+E$, which is $\vc s_2$ when $\rho=4$.  

We extend $G_{\{\vc e_4,\vc e_5\}}$ to $G_E$ by adding the reflections through $H_{\vc e_5}$ and $H_{\vc v_{45}}$.  This stretches the triangle $\F_{\{\vc e_4,\vc e_5\}}$ into a {\it prism}, which forms the base of the chimney.  The chimney is cut off by the dome given by $\vc v_{14}$.  The result is a polyhedron in $\Bbb H^4$ with the single cusp at $E$.  

Algebraically, the details are as follows:  The faces $F_i$ of the triangle are
\begin{alignat*}{2}
(1) &\qquad F_1= H_{\vc v_{12}}: & \qquad x_1-x_2&=0 \\
(2) & \qquad F_2= H_{\vc v_{23}}: & \qquad x_2-x_3&=0 \\
(3)&\qquad F_3= H_{\vc u}: &\qquad x_2+2x_3&=0,
\end{alignat*}
and are on the plane
\[
F_4=H_{\vc v_{45}}: \qquad x_4-x_5=0.
\]
Let $Q_i$ satisfy three of the above four equations, where the ($i$)-th equation is omitted for $i=1, 2, 3$.  Let $Q_i$ also satisfy $Q_i\odot Q_i=0$ and $Q_i\odot D<0$, so $Q_i\in \partial\H$.  (Recall $D=(1,1,1,1,1)$.) 
Then $Q_i$ are the vertices of the triangle $\F_{\{\vc e_4,\vc e_5\}}$, and are, up to positive scalar multiples,
\begin{align*}
Q_1&=(4,0,0,1,1) \\
Q_2&=(8,8,-4,3,3) \\
Q_3&=(4,4,4,-1,-1).
\end{align*}
Let $Q_i'$ be the corresponding vertices of the prism on the face $F_5=H_{\vc e_5}$, so $Q_i$ satisfies equations ($j$) for $j\neq i$, $Q_i'\odot Q_i'=0$, $Q_i'\odot D<0$, and 
\[
F_5=H_{\vc e_5}: \qquad x_1+x_2+x_3+x_4-x_5=0.
\]
Solving, we get 
\begin{align*}
Q_1'&=(1,0,0,0,1) \qquad \hbox{(the point of tangency between $\vc e_1$ and $\vc e_5$)} \\
Q_2'&=(2,2,-1,0,3) \\
Q_3'&=(1,1,1,-1,2). 
\end{align*}
Finally, we select for the last face of $\F$ the plane $F_6=H_{\vc v_{14}}$, where $\vc v_{14}=(1,0,0,-1,0)$.  We verify that $\vc v_{14}\odot E<0$, that $ \vc v_{14}\odot Q_i>0$ for all $i$, and that $\vc v_{14}\odot Q_i'>0$ for all $i$.  Thus the prism lies entirely within the dome $H_{\vc v_{14}}$, so the chimney above the prism and above the dome is a $4$-dimensional polyhedron in $\H$ with only one point $E$ on $\partial \H$.  The group
\[
G=\langle R_{\vc v_{12}}, R_{\vc v_{23}}, R_{\vc u}, R_{\vc v_{45}}, R_{\vc e_5}, R_{\vc v_{14}}\rangle
\]
has fundamental domain $\F$ and because $\F$ has finite volume, $G$ has finite index in $\O_{\LL}^+$.  The group $\CC$ derived from $G$ by removing $R_{\vc e_5}$ from the list of generators is our thin group, and $\CC\F$ is the Apollonian polyhedron whose intersection with $\partial \H$ is the Apollonian/Soddy sphere packing.  

\subsection{Elements in $G$}  In the above, we have identified several types of reflections, namely $R_{\vc e_i}$ for a base element $\vc e_i$; the transpositions $R_{\vc v_{ij}}$ with $\vc v_{ij}=\vc e_i-\vc e_j$;  the \Viete involutions $R_{\vc s_i}$; and the reflection $R_{\vc u}$ for $u=\vc e_1-\vc e_{\rho-2}+E$.  We know $R_{\vc e_i}$ and $R_{\vc v_{ij}}$ are in $\O_{\LL}^+$ for all dimensions; and that the \Viete involutions are not in $\O_{\LL}^+$ for $\rho\geq 6$.  

\begin{lemma}  Let $\vc u=\vc e_i-\vc e_j+E$ for fixed $i,j\leq \rho-2$ and $E=\vc e_{\rho-1}+\vc e_\rho$.  The reflection $R_{\vc u}$ is in $\O_{\LL}^+$ for any $\rho\geq 4$ and fixes both $\vc e_{\rho-1}$ and $\vc e_\rho$.
\end{lemma}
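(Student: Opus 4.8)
The plan is a direct computation from the reflection formula~(\ref{eq2}), using only $\vc e_a\odot\vc e_a=1$ and $\vc e_a\odot\vc e_b=-1$ for $a\neq b$. Note first that the four basis vectors appearing in $\vc u=\vc e_i-\vc e_j+\vc e_{\rho-1}+\vc e_\rho$ are distinct (we must take $i\neq j$, else $\vc u=E$ is null). I would begin by expanding $\vc u\odot\vc u$: the four diagonal terms each contribute $+1$, while the six cross terms cancel in pairs, giving $\vc u\odot\vc u=4$. Hence the reflection is
\[
R_{\vc u}(\vc x)=\vc x-\tfrac12(\vc u\odot\vc x)\,\vc u,
\]
and since $\vc u\odot\vc u=4>0$ the vector $\vc u$ is spacelike, so $R_{\vc u}\in\O^+(\Bbb R)$ by the remark accompanying Equation~(\ref{eq2}).

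Next I would compute $\vc u\odot\vc e_k$ for each basis index $k$. A short case check gives $\vc u\odot\vc e_k=0$ for $k\in\{i,\rho-1,\rho\}$, $\vc u\odot\vc e_j=-4$, and $\vc u\odot\vc e_k=-2$ for any remaining index $k\notin\{i,j,\rho-1,\rho\}$ (this last case occurs only when $\rho\geq5$). The vanishing at $k=\rho-1$ and $k=\rho$ immediately yields $R_{\vc u}(\vc e_{\rho-1})=\vc e_{\rho-1}$ and $R_{\vc u}(\vc e_\rho)=\vc e_\rho$, which is the second assertion. For the lattice claim, since $\vc u$ has integer coordinates it suffices that the scalar $\tfrac12(\vc u\odot\vc e_k)$ be an integer for every $k$; the case check shows each $\vc u\odot\vc e_k$ is even, so $R_{\vc u}(\vc e_k)\in\LL$ for all $k$. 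As $R_{\vc u}$ is an involution, the inclusion $R_{\vc u}(\LL)\subseteq\LL$ upgrades to $R_{\vc u}(\LL)=\LL$, whence $R_{\vc u}\in\O_{\LL}^+$.

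The computation is routine, so the only real content I would flag is the interplay between the denominator $\vc u\odot\vc u=4$ and the parity of the numerators $\vc u\odot\vc e_k$: the coefficient in $R_{\vc u}$ is the half-integer $\tfrac12$, so a single odd value of $\vc u\odot\vc e_k$ would push $\vc e_k$ out of $\LL$ and the lemma would fail. The substance of the argument is therefore precisely that every $\vc u\odot\vc e_k$ is even, and this parity holds uniformly for all $\rho\geq4$.
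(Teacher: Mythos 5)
Your proof is correct and follows essentially the same route as the paper: both reduce to the observations that $\vc u\odot\vc u=4$ and that $\vc u\odot\vc x$ is always even on $\LL$, so the coefficient $\tfrac12(\vc u\odot\vc x)$ is an integer, with the fixing of $\vc e_{\rho-1}$ and $\vc e_\rho$ coming from $\vc u\odot\vc e_{\rho-1}=\vc u\odot\vc e_\rho=0$. The only cosmetic difference is that the paper verifies the parity via the coordinate formula $\vc u\odot\vc x=2x_i-2x_j+2x_{\rho-1}+2x_\rho-2\sum_m x_m$, whereas you check it on each basis vector $\vc e_k$ and extend by linearity.
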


\begin{proof}  Note that $\vc u\odot \vc u=4$, so 
\[
R_{\vc u}(\vc x)=\vc x-2\frac{\vc u\odot \vc x}{4}\vc u.
\]
Thus, it is enough to show that $\vc u\odot \vc x\equiv 0 \pmod 2$, which is not difficult.  However, we will make use of $\vc u\odot \vc x$ in the following, so we calculate it.  We first note that
\[
\vc e_k\odot \vc x=2x_k-\sum_{m=1}^\rho x_m,
\]
so 
\begin{equation}\label{eq3}
\vc u\odot \vc x=2x_i-2x_{j}+2x_{\rho-1}+2x_\rho-2\sum_{m=1}^\rho x_m.  
\end{equation}

In particular, if $\vc x=\vc e_{\rho-1}$ or $\vc e_\rho$ (or $\vc e_i$), then $\vc u\odot \vc x=0$.  Thus both are fixed by $R_{\vc u}$.  
\end{proof}

Another involution of particular value is the map
\begin{equation}\label{ephi}
\phi_{P,E} (\vc x)=\frac{2((P\odot \vc x)E+(E\odot \vc x)P)}{P\odot E}-\vc x,
\end{equation}
for $P$ and $E\in \partial \H$.  On the Euclidean space $\partial \H_E$, it is the $-1$ map centered at $P$.  

\begin{remark}  Suppose $X$ is a K3 surface with elliptic fibration $E$ and section $O$.  For any point $Q$ on $X$, there is a fiber $C\in E$ that contains $Q$.  Let $O_C$ be the point of intersection of the fiber $C$ and the section $O$, and define $\ss(Q)=-Q$ using the group structure of the elliptic curve $C$ with zero $O_C$.  Then $\ss$ is an automorphism of $X$.  If $E$ has maximal rank in $\Pic(X)$, then the pull back $\ss^*$ acting on $\Pic(X)$ is the map $\phi_{P,E}$, where $P=R_{O}(E)$ (the reflection of $E$ in the plane $O\odot \vc x=0$) \cite{Bar17b}.
\end{remark}  
 
\subsection{The cases $\rho=6$, $7$, and $8$} \label{sec34} 
These cases are done the same way as $\rho=5$.  We first find $\F_{\vc e_{\rho-1},\vc e_\rho}$ by looking at the following faces and the equations they give:
\begin{alignat*}{2}
(1)& \qquad F_1=H_{\vc v_{12}}: & \qquad x_1-x_2&=0 \\
(2)&\qquad F_2=H_{\vc v_{23}}: & \qquad x_2-x_3&=0 \\
&\vdots&&\vdots \\
(\rho-3)& \qquad F_{\rho-3}=H_{\vc v_{\rho-3,\rho-2}}: & \qquad x_{\rho-3}-x_{\rho-2}&=0 \\
(\rho-2)&\qquad F_{\rho-2}=H_{\vc u}: &\qquad x_2+x_3+...+x_{\rho-3}+2x_{\rho-2}&=0.
\end{alignat*}
We let $Q_i$ be the point that satisfies all but the ($i$)-th equation above, is on 
\[
F_{\rho-1}=H_{\vc v_{\rho-1,\rho}}: \qquad x_{\rho-1}-x_\rho=0,
\]
and satisfies $Q_i\odot Q_i=0$ and $Q_i\odot D<0$.  These are the vertices of the polytope $\F_{\{\vc e_{\rho-1},\vc e_\rho\}}$.  That we can solve for each $Q_i$, and that the only solution to all the equations is $\vc 0$, shows that the polytope is bounded.  We extend the polytope to the prism $\F_E$ by solving for the $Q_i'$ that satisfies all but the ($i$)-th equation, is on 
\[
F_{\rho}=H_{\vc e_{\rho}}: \qquad x_1+x_2+...+x_{\rho-1}-x_\rho=0,
\]
and satisfies $Q_i'\odot Q_i'=0$, and $Q_i'\odot D<0$.  Over the prism is the chimney.  The dome,
\[
F_{\rho+1}=H_{\vc v_{1,\rho-1}}: \qquad x_1-x_{\rho-1}=0,
\]
has $E$ on one side, and all the $Q_i$'s and $Q_i'$'s on the other side (or in one case, on the plane).  The portion of the chimney outside the dome is $\F$, which intersects $\partial \H$ at the one point $E$, or in the case $\rho=8$, two points $E$ and $Q_4$; its faces give us the reflections that generate $G$; and omitting the reflection $R_{\vc e_\rho}$ gives us $\CC$.  This is slightly slicker than what was done in \cite{Bar18}, as the fundamental domain $\F_E$ is more compact, so we need only one dome to cover it.  

\begin{remark}  For $\rho=8$, the fundamental domain $\F$ has two cusps, one at $E$ and the other at $Q_4$.  Since the face $F_8=H_{\vc e_8}$ does not go through $Q_4$, the stabilizer $\GG_{Q_4}$ has a full rank of six (meaning $\GG_{Q_4}$ includes a subgroup isomorphic to $\Bbb Z^6$).  Thus, in the perspective $\A_{8,Q_4}$, the Apollonian packing includes a six-dimensional Euclidean lattice of spheres.  For lower dimensions, the best we can get is an $N-1=\rho-3$ dimensional lattice (see for example \fref{fig3}).    
\end{remark}

\section{Coxeter graphs (an aside) }\label{sCoxeter}

The groups $\CC=\CC_\rho$ for $\rho\leq 8$ are generated with reflections, so we can derive their Coxeter graphs.  Coxeter graphs are not unique for a packing, since there may be many groups that generate the same packing.  For example, the three groups $\CC$, $\CC'$, and $\CC_\Ap$ that we saw in Section \ref{s2.1} for the Apollonian circle packing ($\rho=4$) have the Coxeter graphs shown in \fref{fig4}.  Maxwell gives 13 different Coxeter graphs that all generate the Apollonian/Soddy sphere packing \cite[Table I]{Max81}.  
The groups $\CC_{\rho}$ that we chose for $\rho=5, ..., 8$ have a nice symmetry to their Coxeter graphs -- see \fref{fig5}.

\myfig{fig4}{The Coxeter graphs for the groups $\CC$, $\CC'$, and $\CC_\Ap$ for the Apollonian circle packing ($\rho=4$).}

\myfig{fig5}{The Coxeter graphs for $\CC_{\rho}$ with $\rho=5$, $6$, $7$, and $8$.  A vertex labeled $i$ represents the face $F_i$. 
 Note that $F_i\odot F_i=4$ for $i\neq \rho$, and $F_\rho\odot F_\rho=1$.  }  


\section{The case $\rho=9$}
We solve for $Q_i$ and $Q_i'$ using the equations/faces in Section \ref{sec34}, and find:
\begin{alignat*}{2}
Q_1&=(4,0,0,0,0,0,0,1,1) \qquad \qquad & Q_1'&=(1,0,0,0,0,0,0,0,1) \\
Q_2&=(24,24,-4,-4,-4,-4,-4,15,15) & Q_2'&=(6,6,-1,-1,-1,-1,-1,2,9) \\
Q_3&=(20,20,20,-8,-8,-8,-8,19,19) & Q_3'&=(5,5,5,-2,-2,-2,-2,3,10) \\
Q_4&=(16,16,16,16,-12,-12,-12,19,19) &\qquad Q_4'&=(4,4,4,4,-3,-3,-3,3,10) \\
Q_5&=(12,12,12,12,12,-16,-16,15,15) & Q_5'&=(3,3,3,3,3,-4,-4,2,9) \\
Q_6&=(8,8,8,8,8,8,-20,7,7) & Q_6'&=(2,2,2,2,2,2,-5,0,7) \\
Q_7&=(4,4,4,4,4,4,4,-5,-5) & Q_7'&=(1,1,1,1,1,1,1,-3,4).
\end{alignat*}
The dome $F_{10}=H_{\vc v_{18}}$ includes all of these points except for $Q_4$ and $Q_5$.  The midpoint of $Q_4Q_5$ (in $\partial \H_E$) is $P_1=(1,1,1,0,-1,-1,1,1)$, which lies on $F_{10}$.  This suggests that $P_1$ is a cusp, and though the faces wall off all but one dimension, there do not seem to be any reflections that close it off.  However, $\phi_{P_1,E}\in \O_{\LL}^+$ (see Equation \ref{ephi}).  While it might be natural to test whether $\phi_{P_1,E}$ is in $\O_{\LL}^+$, it is not so obvious to look at $\phi_{P_1,P_2}$ for $P_2=(1,1,1,1,2,-1,-2,1,2)$, which is also in $\O^+_{\LL}$.  We try to motivate this second choice in the remark at the end of this section.

\begin{theorem}  The reflections through the faces $F_1$, ..., $F_{10}$ and the maps $\phi_{P_1,E}$ and $\phi_{P_1,P_2}$ generate a subgroup $G=G_9$ of finite index in $\O_{\LL_9}^+$.  
\end{theorem}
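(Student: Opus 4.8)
\emph{Strategy.} The plan is to prove finite index by exhibiting a fundamental domain $\F=\F_9$ for $G$ of finite hyperbolic volume. Since $\LL_9$ is an integral lattice of signature $(8,1)$, the group $\O_{\LL_9}^+$ is a lattice in the isometry group $\O^+(\Bbb R)$ of $\H\cong\Bbb H^8$; hence any discrete subgroup admitting a fundamental domain of finite volume automatically has finite index (the index is the ratio of covolumes). Thus the whole proof reduces to producing such an $\F$. All the proposed generators already lie in $\O_{\LL_9}^+$: the reflections $R_{F_i}$ are the transpositions $R_{\vc v_{ij}}$, the reflection $R_{\vc u}$, $R_{\vc e_9}$, and the dome $F_{10}=H_{\vc v_{18}}$ (itself a transposition), all integral by the Lemma and by the discussion of elements of $G$ in Section~\ref{sec34}; and $\phi_{P_1,E}$, $\phi_{P_1,P_2}$ are integral by the explicit choices of $P_1,P_2$ recorded above. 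Hence $G\leq\O_{\LL_9}^+$, and it remains to build $\F$.

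\emph{Construction of $\F$.} First I would assemble $\F$ exactly as in Section~\ref{sec34}: take the prism $\F_E$ with vertices $Q_i,Q_i'$ as the base of the chimney over the cusp $E$, and cut the chimney by the dome $F_{10}=H_{\vc v_{18}}$. The new feature in dimension $\rho=9$ is that the dome fails to cover $Q_4$ and $Q_5$, so the chimney has an open end near $P_1$ that no reflecting wall closes off. I would use the two involutions to fold this end shut: both $\phi_{P_1,E}$ and $\phi_{P_1,P_2}$ fix $P_1$, and they pair the faces of the chimney left open by the dome with one another, producing a polyhedron $\F$ with finitely many faces whose only candidate ideal vertices are $E$ and $P_1$.

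\emph{$\F$ is a fundamental domain.} Because the generating set now contains the orientation-preserving maps $\phi_{P_1,E},\phi_{P_1,P_2}$ as well as reflections, the fact that $\F$ tiles is no longer automatic from a Coxeter description, so I would invoke the general form of Poincar\'e's polyhedron theorem. This requires checking the face-pairing data — each $R_{F_i}$ pairs its wall with itself, while $\phi_{P_1,E}$ and $\phi_{P_1,P_2}$ identify the faces bounding the folded end at $P_1$ — together with the ridge-cycle and dihedral-angle conditions around each codimension-two face, the angles being read off from the Gram entries $F_i\odot F_j$ via \Eq{eq1}. Poincar\'e's theorem then yields simultaneously that the listed maps present $G$ and that $\F$ is a fundamental domain.

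\emph{Finite volume, and the main obstacle.} It remains to show $\F$ has finite volume, which reduces to verifying that each ideal vertex is a genuine cusp, i.e.\ that its stabilizer is a Euclidean crystallographic group of full rank $\rho-2=7$ on the horosphere. For $E$ this is inherited from the prism construction exactly as in the lower-dimensional cases. The crux is the cusp at $P_1$. Passing to the model $\partial\H_{P_1}$ with $P_1$ at infinity and using the symmetry $\phi_{P,E}=\phi_{E,P}$ of \Eq{ephi}, the maps $\phi_{P_1,E}$ and $\phi_{P_1,P_2}$ become the point reflections (the $-1$ maps) centered at the two distinct finite points $E$ and $P_2$; their product is therefore a nontrivial Euclidean translation. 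Conjugating this translation by the wall reflections of $\F$ that fix $P_1$ spreads it over the orbit of its direction under the induced finite point group, and I would verify that the resulting translations span a rank-$7$ lattice in the horosphere. This full-rank verification — confirming that the folded end at $P_1$ is genuinely closed off rather than merely walled down to a single line — is the principal obstacle, and it is precisely the point at which the two a priori unmotivated choices $\phi_{P_1,E}$ and $\phi_{P_1,P_2}$ are needed. Once $P_1$ is shown to be a full-rank cusp, $\F$ has finitely many faces and only the two cusps $E$ and $P_1$, hence finite volume, and therefore $G$ has finite index in $\O_{\LL_9}^+$.
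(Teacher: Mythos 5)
Your overall frame agrees with the paper's: both proofs reduce finite index to exhibiting a finite-volume fundamental domain $\F$ for $G$ inside $\O_{\LL_9}^+$, built as a chimney over a prism, truncated by domes, with the two involutions $\phi_{P_1,E}$ and $\phi_{P_1,P_2}$ brought in to close off the region near $P_1$ that the single dome $F_{10}=H_{\vc v_{18}}$ fails to cover. Where you diverge is in how the finite-volume claim is actually certified. The paper does it by an explicit incidence computation: it introduces a concrete wall $F_{11}=H_{\vc n_1}$ with $\vc n_1=(1,1,1,1,-6,1,1,1,1)$ for $\phi_{P_1,E}$ (cutting the prism $\P$ down to $\F_E$ and discarding the piece containing $Q_4$), a second dome $F_{12}=H_{\vc n_2}$ with $\vc n_2=(3,3,3,3,3,-4,-4,3,3)$ for $\phi_{P_1,P_2}$, and then checks vertex by vertex (the $Q_i$, $Q_i'$, and the auxiliary points $P_{4i}$, $P_{5i}$ on the edges through $Q_4$ and $Q_5$) that $F_{10}$ and $F_{12}$ together cover $\F_E$, meeting only at $P_1$. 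That computation is what shows $\overline{\F}\cap\partial\H=\{E,P_1\}$ and hence finite volume. You instead propose Poincar\'e's polyhedron theorem plus a cusp-rank analysis at $P_1$.

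The genuine gap is that the decisive verification is deferred in your write-up at exactly the point where the theorem has content. You assert that the folded-up polyhedron has ``only candidate ideal vertices $E$ and $P_1$'' and that you ``would verify'' that the translations obtained by conjugating $\phi_{P_1,E}\circ\phi_{P_1,P_2}$ span a rank-$7$ lattice in the horosphere at $P_1$. These are the same nontrivial fact in two guises: that the end of the chimney near $Q_4$ and $Q_5$ is genuinely sealed rather than leaking out to a positive-dimensional subset of $\partial\H$. A single pair of point reflections only yields one translation direction a priori, and whether its orbit under the point group of the walls through $P_1$ spans $\Bbb R^{7}$ is a concrete computation that could in principle fail (and, if argued from the general fact that cusp stabilizers of finite-covolume groups are full-rank crystallographic, would be circular, since finite covolume is what you are trying to prove). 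Likewise the face-pairing and ridge-cycle hypotheses of Poincar\'e's theorem are invoked but not checked. To complete the proof along your lines you would need to either carry out the rank-$7$ computation at $P_1$ explicitly, or fall back on the paper's method of choosing explicit walls for the two involutions and checking, on the finite list of vertices and edge intersection points, that the two domes cover the truncated prism.
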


\begin{proof}  As before, we construct a fundamental domain $\F$ for $G$ and show that its intersection with $\partial \H$ is a finite set of points, and hence has finite volume.  The prism $\P$ with faces $F_1$ through $F_{9}$ is not $\F_E$ since $\phi_{P_1,E}$ is also in $\CC_E$, though we can choose $\F_E$ to be a subset of $\P$.
The map $\phi_{P_1,E}$ is the $-1$ map through $P_1$ on $\partial \H_E$, so any plane through $P_1$ and $E$ can serve as a face, since $\phi_{P_1,E}$ sends all points on one side of the plane to points on the other side, and vice versa.  Let us take $\vc n_1=(1,1,1,1,-6,1,1,1,1)$, which satisfies $\vc n_1\odot Q_4=\vc n_1\odot E=0$ as desired, and let $F_{11}=H_{\vc n_1}$.  
We verify that the points $Q_1$ and $Q_1'$ lie on $F_{11}$, the points $Q_2$, $Q_3$, $Q_4$, $Q_2'$, $Q_3'$, and $Q_4'$ lie in $H_{\vc n_1}^+$, and the rest lie in $H_{\vc n_1}^-$.  That is, $F_{11}$ slices $\P$ vertically into two pieces.  We take $\F_E$ to be the prism formed by the intersection of $\P$ with $H_{\vc n_1}^-$ and use this to give us our chimney.  It is now enough to find several domes that cover $\F_E$.        

The dome $F_{10}=H_{\vc v_{18}}$ covers all the vertices of $\P$ except for the two points $Q_4$ and $Q_5$.  The midpoint of $Q_4Q_5$ is $P_1$, which lies on $F_{10}$, so the portion of the  prism $\P$ outside the dome $F_{10}$ has two pieces that touch at $P_1$.  The edges of the prism $\P$ that include $Q_4$ are $Q_4Q_i$ for $i\neq 4$, and $Q_4Q_4'$.  Let $P_{4i}$ be the point closest to $Q_4$ that lies on both $F_{10}$ and $Q_4Q_i$ for $i\neq 4$, and on $Q_4Q_4'$ for $i=4$.  (Note that $P_{45}=P_1$.)  Then the polytope with vertices $Q_4$ and $P_{4i}$ for all $i$ includes one of the two pieces of the portion of the prism $\P$ outside the dome $F_{10}$.  We verify that all these points ($Q_4$ and $P_{4i}$ for all $i$) lie in $H_{\vc n_1}^+$, and therefore this piece is not part of $\F_E$.  

We similarly define $P_{5i}$ (where again $P_{54}=P_1$).  For the map $\phi_{P_1,P_2}$ we can take any plane $H_{\vc n_2}$ that includes $P_1$ and $P_2$.  Let us take $\vc n_2=(3,3,3,3,3,-4,-4,3,3)$ and define $F_{12}=H_{\vc n_2}$ (which is tangent to $H_{\vc n_1}$ at $P_1$ in $\partial \H_E$).  We verify that $E$ is in $H_{\vc n_2}^-$, while the points $Q_5$ and $P_{5i}$ lie in $H_{\vc n_2}^+$.   Thus, the domes $F_{10}$ and $F_{12}$ cover $\F_E$.  Hence, the region $\F$ bounded by the faces $F_1$ through $F_{12}$ (as described above) is a fundamental domain for $G$.  It intersects $\partial \H$ at  $E$ and $P_1$, so has finite volume.  Hence $G$ has finite index in $\O_{\LL}^+$.     
\end{proof}

Our argument from here is like that in \cite{Bar18}, and after establishing Theorem \ref{t6.1} in the next section, completes the argument for $\rho=10$ as well.  We let 
\[
\K'_\rho=\bigcap_{\cc\in \CC_\rho}H^-_{\cc \vc e_\rho}
\]
and show $\K'_\rho=\K_\rho$.  We will need the following result from \cite{Bar18} (which establishes Property (c)):

\begin{lemma}[Lemma 5.1 of \cite{Bar18}]  \label{l5.1old}The planes $H_{\vc e_i}$ are faces of $\K$.  \end{lemma}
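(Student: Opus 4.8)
The plan is to verify that each $\vc e_i$ lies in $\E_1$ and then to exhibit an explicit point in the relative interior of $\K\cap H_{\vc e_i}$, which forces that intersection to be a genuine facet. For the first part, $\vc e_i\odot \vc e_i=1$ by normalization, while $\vc e_i\odot D=\vc e_i\odot(\vc e_1+\cdots+\vc e_\rho)=1+(\rho-1)(-1)=2-\rho<0$, so $\vc e_i\in\E_1$ and $H_{\vc e_i}^-$ is already one of the half-spaces defining $\K$. The content is therefore to show this constraint is non-redundant.

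For that, I would test the candidate point $\vc p=D+(\rho-2)\vc e_i$. A direct computation gives $\vc e_i\odot \vc p=(2-\rho)+(\rho-2)=0$, so $\vc p$ lies on $H_{\vc e_i}$, and $\vc p\odot \vc p=-2(\rho-1)(\rho-2)<0$, so after scaling $\vc p$ represents an interior point of $\H$ sitting on the wall $H_{\vc e_i}$. The heart of the matter is to show $\vc n\odot \vc p<0$ for every other $\vc n\in\E_1$. Writing $\vc n=\sum_j a_j\vc e_j$ with $a_j\in\Bbb Z$ and $\sigma=\sum_j a_j$, the identity $\vc e_k\odot\vc n=2a_k-\sigma$ (used already in the proof of the previous lemma) yields $\vc n\odot D=(2-\rho)\sigma$ and $\vc n\odot \vc p=2(2-\rho)(\sigma-a_i)$. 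Thus $\vc n\odot D<0$ forces the integer $\sigma$ to satisfy $\sigma\ge 1$, while $\vc n\odot\vc n=1$ gives $\sum_j a_j^2=(1+\sigma^2)/2$.

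The main obstacle, and the only place where discreteness of $\LL$ is essential, is to rule out $a_i>\sigma$. If $a_i\ge\sigma+1$ then $(1+\sigma^2)/2=\sum_j a_j^2\ge a_i^2\ge(\sigma+1)^2$, i.e. $\sigma^2+4\sigma+1\le 0$, which is impossible for $\sigma\ge 1$; and if $a_i=\sigma$ then $\sum_{j\ne i}a_j^2=(1-a_i^2)/2\ge 0$ together with $a_i=\sigma\ge 1$ forces $a_i=1$, hence $\vc n=\vc e_i$. Consequently $\sigma-a_i\ge 1$ for every $\vc n\in\E_1$ with $\vc n\ne\vc e_i$, which gives the uniform bound $\vc n\odot\vc p\le 2(2-\rho)<0$. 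Since $\vc p$ is an interior point of $\H$, where the polyhedron is locally finite, these strict inequalities persist on a full $(\rho-1)$-dimensional neighborhood of $\vc p$ inside $H_{\vc e_i}$, so $\vc p$ lies in the relative interior of $\K\cap H_{\vc e_i}$ and $H_{\vc e_i}$ is a face. Finally, because the transpositions $R_{\vc v_{ij}}\in\O_\LL^+$ permute the $\vc e_i$ while fixing $D$ and $\K$, it suffices to treat a single index, and the conclusion then holds for all $i$.
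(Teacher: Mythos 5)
Your argument is correct, but note that this paper never proves the lemma itself: it is imported verbatim as Lemma 5.1 of \cite{Bar18}, so there is no in-paper proof to compare against, and what you have written is a self-contained replacement for that citation. Both halves of your argument check out. The membership $\vc e_i\in\E_1$ is immediate from $\vc e_i\odot\vc e_i=1$ and $\vc e_i\odot D=2-\rho<0$, and the non-redundancy is carried by the witness $\vc p=D+(\rho-2)\vc e_i$: your formulas $\vc e_k\odot\vc n=2a_k-\sigma$, $\vc n\odot D=(2-\rho)\sigma$, $\vc n\odot\vc p=2(2-\rho)(\sigma-a_i)$, $\vc p\odot\vc p=-2(\rho-1)(\rho-2)$, and $2\sum_j a_j^2=1+\sigma^2$ are all correct, and the case split ($a_i\ge\sigma+1$ forces $\sigma^2+4\sigma+1\le 0$; $a_i=\sigma$ forces $\vc n=\vc e_i$) is airtight, yielding the uniform bound $\vc n\odot\vc p\le 2(2-\rho)$ for all $\vc n\in\E_1$ with $\vc n\ne\vc e_i$. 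Two points deserve an explicit sentence if this is written up. First, you should verify that $\vc p$ lies in the forward cone (so that its normalization is actually on $\H$ and not the other sheet); this follows from $\vc p\odot D=(2-\rho)(2\rho-2)<0$ together with $\vc p\odot\vc p<0$. Second, the passage from ``strict inequalities at $\vc p$'' to ``a relatively open neighborhood of $\vc p$ in $H_{\vc e_i}$ lies in $\K$'' rests on local finiteness of the family $\{H_{\vc n}:\vc n\in\E_1\}$ near interior points of $\H$, which you invoke but do not justify; it follows because for a timelike $\vc p$ the form $\vc x\odot\vc x+\tfrac{2}{-\vc p\odot\vc p}(\vc x\odot\vc p)^2$ is positive definite, so only finitely many lattice vectors $\vc n$ with $\vc n\odot\vc n=1$ satisfy a given bound on $|\vc n\odot\vc p|$. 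Neither point is a gap in substance, and the closing reduction to a single index via the transpositions $R_{\vc v_{ij}}$ is a clean touch (though your argument already handles all $i$ symmetrically).
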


The following is an unsurprising density result:

\begin{lemma}  \label{l5.3}  Suppose $D\in H_{\vc n}^-$ for some $\vc n\in \Bbb R^{\rho-1,1}$ with $\vc n\odot \vc n>0$.  Then there exists $\vc m\in \E_1$ such that $H_{\vc m}^+\subset H_{\vc n}^+$.  
\end{lemma}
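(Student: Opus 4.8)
The plan is to convert the containment of hyperbolic half-spaces into a single inner-product inequality and then to produce the required lattice vector by a finite-covolume density argument. First I would normalize $\vc n$ so that $\vc n\odot \vc n=1$ (scaling by a positive constant changes neither $H_{\vc n}$ nor $H_{\vc n}^{\pm}$), and reduce, if necessary by an arbitrarily small perturbation, to the strict case $\vc n\odot D<0$. For unit spacelike vectors $\vc n$ and $\vc m$ with $D$ on the negative side of each, I claim that $H_{\vc m}^+\subseteq H_{\vc n}^+$ as regions of $\H$ exactly when $\vc n\odot \vc m\ge 1$. Indeed, two such hyperplanes are disjoint or tangent at infinity precisely when $|\vc n\odot \vc m|\ge 1$ (from Equation (\ref{eq1}), since intersection in $\H$ forces $|\cos\tt|\le 1$), and the common orientation $\vc n\odot D<0$, $\vc m\odot D<0$ selects the nesting $\vc n\odot \vc m\ge 1$ over $\vc n\odot \vc m\le -1$; a short check on $\H$, where $|x_i|<x_\rho$, confirms that no $\vc x\in\H$ can satisfy $\vc m\odot \vc x\ge 0>\vc n\odot \vc x$ once $\vc n\odot \vc m\ge 1$ and $\vc m\odot D<0$. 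Thus the lemma becomes: produce $\vc m\in\E_1$ with $\vc n\odot \vc m\ge 1$.

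Geometrically such an $\vc m$ is a unit lattice normal whose solid side $H_{\vc m}^+$ lies in the ideal region $U=H_{\vc n}^+\cap\partial\H$. Because $\vc n\odot \vc n>0$, the hyperplane $H_{\vc n}$ meets $\partial\H$ in a genuine $(\rho-3)$-sphere, and since $D\in H_{\vc n}^-$ the set $U$ is a nonempty open subset of $\partial\H$; fix an interior ideal point $\xi\in U$. Here I would invoke that $G$, having the finite-volume fundamental domain $\F$ constructed above and finite index in $\O_{\LL}^+$, is a lattice in the isometry group $\O^+(\Bbb R)$ of $\H$. Consequently its limit set is all of $\partial\H$, and the tiles $\cc\F$ for $\cc\in G$ accumulate at $\xi$ with visual diameter tending to $0$. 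The only face of $\F$ with a unit normal is $F_\rho=H_{\vc e_\rho}$ (the Coxeter data record $F_i\odot F_i=4$ for $i\neq\rho$ and $F_\rho\odot F_\rho=1$), so each translate contributes the unit lattice vector $\cc\vc e_\rho\in\LL$; fixing its sign so that $\cc\vc e_\rho\odot D<0$ places it in $\E_1$.

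The crux is then to show that for a tile pushed far enough toward $\xi$ the half-space $H_{\cc\vc e_\rho}^+$ (the side away from $D$) is a small ball lying inside $U$, so that $\vc m=\cc\vc e_\rho$ gives $H_{\vc m}^+\subseteq H_{\vc n}^+$. This is the step I expect to be the main obstacle: a small tile need not have small face-spheres, so shrinking of $\cc\F$ alone is insufficient and I must control the unit-normal face specifically, ruling out a nearly flat, large sphere. I would handle this by choosing $\xi$ among the parabolic fixed points, which are dense in $\partial\H$ for a lattice, and using the crystallographic stabilizer of the cusp to exhibit copies of the base sphere $\vc e_\rho$ at arbitrarily small scale inside any prescribed neighborhood; equivalently one argues that the $\O_{\LL}^+$-orbit of $H_{\vc e_\rho}$ is dense among hyperplanes, so that $\sup\{\vc n\odot \cc\vc e_\rho:\cc\in\O_{\LL}^+\}=+\infty$ and in particular exceeds $1$. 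Once a value $\vc n\odot \vc m\ge 1$ is attained with $\vc m\in\E_1$, the reduction of the first paragraph yields $H_{\vc m}^+\subseteq H_{\vc n}^+$, completing the proof.
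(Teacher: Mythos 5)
Your overall strategy --- push a copy of the unit lattice vector $\vc e_\rho$ toward a cusp point lying inside $H_{\vc n}^+$ and let it shrink --- is the same one the paper uses, but two steps in your write-up do not hold. First, the reduction in your opening paragraph is false: for unit spacelike $\vc n,\vc m$ with $\vc n\odot D<0$ and $\vc m\odot D<0$, the condition $\vc n\odot\vc m\ge 1$ says the hyperplanes do not cross, and with the $D$-condition it forces the positive sides to be \emph{nested}, but it does not tell you which contains which. Concretely, in $\Bbb R^{2,1}$ take $\vc n=(1,0,0)$ and $\vc m=(\cosh s,0,-\sinh s)$: then $\vc n\odot\vc m=\cosh s\ge 1$, and $\vc m\odot D=d_1\cosh s+d_3\sinh s<0$ whenever $\tanh s<|d_1|/d_3$ (so for $s$ small this is compatible with $D\in H_{\vc n}^-$), yet $H_{\vc n}^+\subset H_{\vc m}^+$ strictly --- the wrong direction. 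The point $\vc x=(-\ee,0,\sqrt{1+\ee^2})\in\H$ with $0<\ee<\tanh s$ satisfies $\vc m\odot\vc x>0>\vc n\odot\vc x$, so the ``short check'' you assert does not go through. Consequently your closing sentence --- that attaining $\vc n\odot\vc m\ge 1$ for some $\vc m\in\E_1$ finishes the proof --- fails: $\sup_{\cc}\vc n\odot\cc\vc e_\rho=+\infty$ is perfectly compatible with every such sphere being a huge ball that \emph{contains} $H_{\vc n}^+$ rather than sitting inside it.

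Second, of the two devices you offer for actually producing the small sphere, one is circular and the other is left unexecuted. ``The $\O_{\LL}^+$-orbit of $H_{\vc e_\rho}$ is dense among hyperplanes'' is essentially a restatement of the lemma (every ball contains a member of the orbit), so it cannot be invoked. The cusp-stabilizer idea is sound --- and is in substance what the paper does --- but it needs an explicit shrinking family, not just the observation that tiles get small. The paper's version: since $G$ has finite covolume, the orbit $GE$ is dense in $\partial\H$, so choose $\cc\in G$ with $E'=\cc E$ interior to $H_{\vc n}^+$, $E'\ne E$, and set $\vc m(a)=\cc\vc e_\rho+aE'\in\LL$. One computes $\vc m(a)\odot\vc m(a)=1$ and $\vc m(a)\odot E'=0$, while the curvature $-\vc m(a)\odot E=-\cc\vc e_\rho\odot E-aE'\odot E$ is unbounded in $a$ because $E'\odot E\ne 0$; so for $|a|$ large with the right sign, $H_{\vc m(a),E}$ is an arbitrarily small sphere through the interior point $E'$, whence $H_{\vc m(a)}^+\subset H_{\vc n}^+$ and then $\vc m(a)\odot D<0$ follows from $D\in H_{\vc n}^-$. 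Some computation of this kind is needed in place of your appeal to the inner-product criterion; with it, your argument becomes essentially the paper's.
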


That is, given an arbitrary hyperball $H_{\vc n}$ in $\partial H_E$, there exists an $\vc m\in \E_1$ so that the hyperball $H_{\vc m}$ is contained in $H_{\vc n}$.   

\begin{proof}  Since $G$ has finite index in $\O_{\LL}^+$, the image of $E$ under the action of $G$ is dense in $\partial \H$.  Thus, there exists a $\cc\in G$ such that $\cc E\in H_{\vc n}^+$.  Without loss of generality, we may also choose $\cc$ so that $\cc E\neq E$ and $\cc E$ is not on $H_{\vc n}$.  Let $E'=\cc E$ and $\vc f=\cc \vc e_\rho$.   Consider $\vc m=\vc m(a)= \vc f+aE'$.  Then 
\begin{align*}
\vc m\odot \vc m&=\cc \vc e_\rho\odot \cc \vc e_\rho+2a\cc\vc e_\rho\odot \cc E+\cc  E\odot \cc E \\
&=\vc e_\rho \odot \vc e_\rho+2a\vc e_\rho\odot E+E\odot E \\
&=1.
\end{align*}
Note that $\vc m(a)\odot E'=\cc \vc e_\rho \odot \cc E=\vc e_\rho\odot E=0$, so $E'$ lies on $H_{\vc m}$.  The curvature of $H_{\vc m}$ is
\[
-\vc m(a)\odot E= \vc f \odot E +aE'\odot E.
\]
Since $E'\neq E$, we know $E'\odot E\neq 0$, so with an appropriately large choice of $|a|$ and appropriate sign, we can make the hyperball given by $\vc m=\vc m(a)$ have a sufficiently small radius that $H_{\vc m}^+\subset H_{\vc n}^+$.  Since $D\in H_{\vc n}^-\subset H_{\vc m}^-$, we get $\vc m\in \E_1$.  
\end{proof}

\begin{theorem}  For $\rho=9$ and $10$ (and also $4$ through $8$), $\K_\rho'=\K_\rho$, and its intersection $\A_{\rho,E}$ with $\partial \H_E$ satisfies Properties (a), (b), and (d).
\end{theorem}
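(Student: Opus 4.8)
The plan is to prove the single identity $\K_\rho=\CC_\rho\F$ — that the thin group tiles the ample cone by copies of the fundamental domain — deduce $\K'_\rho=\K_\rho$ from it, and then read off Properties (a), (b), and (d) from the tiling together with the density Lemma~\ref{l5.3}. For $\rho=9$ the domain $\F$ and the generators of $G$ are those produced in the preceding theorem; for $\rho=10$ the same argument runs once the analogous $\F$ is built in the next section; and for $\rho=4,\dots,8$ it specializes to the reflection-group set-up of Section~\ref{sec34}.

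First I would establish $\K_\rho=\CC_\rho\F$. The ingredients are: (i) $\F\subseteq\K$, since $\F$ is bounded on the $D$-side of each of its faces and $D\in\K$; and (ii) every face of $\F$ other than $H_{\vc e_\rho}$ lies in the interior of $\K$. For (ii) one observes that the normals $\vc v_{ij}$, $\vc u$, $\vc v_{1,\rho-1}$, $\vc n_1$, $\vc n_2$ all have self-product different from $1$, so none of their planes is a wall of $\K$ (walls being the norm-one $\vc n\in\E_1^*$); being a face of $\F\subseteq\K$ yet not a supporting hyperplane, each must cut through the interior of $\K$, which is confirmed from the explicit vertices $Q_i,Q_i'$ by checking that $\K$ has points on both sides. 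Consequently each generator of $\CC$ — a reflection in an interior face, or one of the involutions $\phi_{P_1,E}$, $\phi_{P_1,P_2}$, which are automorphism-induced and hence preserve the ample cone — carries $\F$ to an adjacent tile still inside $\K$, whereas the one omitted generator $R_{\vc e_\rho}$ crosses the wall $H_{\vc e_\rho}$ (a face of $\K$ by Lemma~\ref{l5.1old}) and leaves $\K$. Since $\F$ is a fundamental domain for $G=\langle\CC,R_{\vc e_\rho}\rangle$ and $\K$ is connected, a tile-chasing argument gives $\K=\CC_\rho\F$, with $\partial\K$ covered exactly by the orbit $\{H_{\cc\vc e_\rho}\}$.

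With this in hand $\K'_\rho=\K_\rho$ is immediate: each $\cc\vc e_\rho$ is a norm-one lattice vector with $\cc\vc e_\rho\odot D<0$ (as $D\in\K\subseteq H^-_{\cc\vc e_\rho}$ and $D$ is generic), so $\CC_\rho\vc e_\rho\subseteq\E_1$; and every wall of $\K$ is some $H_{\cc\vc e_\rho}$, so $\E_1^*\subseteq\CC_\rho\vc e_\rho$. Since the redundant members of $\E_1$ may be dropped, leaving $\K=\bigcap_{\E_1^*}H^-_{\vc n}$, the intersections over $\E_1^*\subseteq\CC_\rho\vc e_\rho\subseteq\E_1$ all coincide, giving $\K'=\K_\rho$ and $\E_1^*=\CC_\rho\vc e_\rho$. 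Property (d) then follows at once: the transpositions among the generators connect all of $\vc e_1,\dots,\vc e_\rho$ into one orbit, so each $\vc e_i\in\E_1^*$, and for any wall $\cc\vc e_\rho$ the cluster $\{\cc\vc e_1,\dots,\cc\vc e_\rho\}$ consists of packing hyperspheres that stay mutually tangent because $\cc$ preserves $\odot$ and $\vc e_i\odot\vc e_j=-1$. Property (b) follows from Lemma~\ref{l5.3}: any hyperball one might try to insert contains, by that lemma, a hyperball $H_{\vc m}^+$ with $\vc m\in\E_1$, which lies outside $\K$ and is therefore already covered by the packing, so no room remains.

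The crux is Property (a): the hyperspheres $H_{\cc\vc e_\rho}$ must be pairwise tangent or disjoint, equivalently $\cc\vc e_\rho\odot\cc'\vc e_\rho\neq 0$ for distinct walls, equivalently the polyhedron $\K$ has no edges in $\H$. By homogeneity I would reduce to the walls meeting $H_{\vc e_\rho}$: across each interior face $\vc a$ of $\F$ one computes $\vc e_\rho\odot R_{\vc a}\vc e_\rho$ and checks it is $\pm1$, so consecutive walls are tangent rather than crossing. The reflection cases are clean, since $\vc e_\rho\odot\vc v_{ij}=0$, $\vc e_\rho\odot\vc u=0$, and $\vc e_\rho\odot\vc v_{1,\rho-1}=0$ force tangency, while $\vc v_{\rho-1,\rho}$ sends $\vc e_\rho$ to the tangent sphere $\vc e_{\rho-1}$. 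The genuinely new difficulty — and where I expect the main obstacle to lie — is the gluing effected by the non-reflective involutions $\phi_{P_1,E}$ and $\phi_{P_1,P_2}$ that occur only for $\rho=9,10$: one must verify that $\phi\vc e_\rho$ is tangent to $\vc e_\rho$, so that the extra cusp $P_1$ (and the configuration at $P_2$) yields only ideal edges, and then confirm that the one-external-face-per-tile structure propagates tangency to all pairs, with no transversal crossing slipping in between distant walls. This local-to-global tangency verification, rather than the formal identity $\K'=\K$, is the step I expect to consume the real work.
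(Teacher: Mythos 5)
There are two genuine gaps, and they sit exactly at the points your outline leans on hardest. First, the tile-chasing claim $\K=\CC\F$ is circular as stated: to know that a chain of tiles $\cc\F$ starting from $\F$ exhausts $\K$, you must already know that the only walls of $\K$ the chain can run into are those in the orbit $\CC H_{\vc e_\rho}$ — equivalently that no norm-one hyperplane $H_{\vc m}$, $\vc m\in\E_1$, cuts through the interior of some tile $\cc\F$. Since $G$ is only of finite index in $\O_{\LL}^+$, the reflection $R_{\vc m}$ need not lie in $G$, so "$\F$ is a fundamental domain for $G$" does not by itself forbid such a hyperplane from slicing a tile; and your observation that the faces of $\F$ have self-product $\neq 1$ addresses the faces of $\F$, not arbitrary members of $\E_1$. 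This is precisely the content of $\K'=\K$, which the paper proves by a different mechanism: a descent on the (integer) curvature $-\vc m\odot E$ of a putative extra face, using $\CC_E$ to move its center into $\F_E$ and the domes to strictly decrease curvature, terminating at a vector $\vc m'$ with $\vc m'\odot E=0$; that endpoint is then killed by Lemma~\ref{l5.1old} (parallel case) or by the parity computation $\vc m'\odot\vc e_\rho\equiv\vc m'\odot\vc m'\equiv 1\pmod 2$ (intersecting case, which would force $\vc m'\odot\vc e_\rho=0$).

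Second, and more seriously, Property (a) is not proved in your proposal: checking $\vc e_\rho\odot R_{\vc a}\vc e_\rho=\pm 1$ for each generator only controls walls adjacent across a single face, and there is no reason tangency of adjacent walls should propagate to arbitrary pairs $\cc\vc e_\rho$, $\cc'\vc e_\rho$ — you correctly flag this local-to-global step as the real work, but that is the theorem, not a remark. The paper's argument is global and short: if two walls intersect transversally then $\vc m\odot\vc m'=0$ (an integer in $(-1,1)$ by Equation~(\ref{eq1})); applying $\cc\in\CC$ with $\cc\vc m'=\vc e_\rho$ gives a norm-one lattice vector $\cc\vc m$ with $\cc\vc m\odot\vc e_\rho=0$, contradicting the mod-$2$ identity above. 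That single lattice-parity obstruction is the missing idea; without it (or a substitute), neither your Property (a) nor your $\E_1^*\subseteq\CC\vc e_\rho$ step closes. Your treatments of Properties (b) and (d) are essentially sound and close to the paper's, granting $\K'=\K$ and Lemma~\ref{l5.3}.
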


\begin{proof}  We begin by showing $\K'=\K$.  Since $\CC\vc e_\rho\subset \E_1$, we know $\K'\supset \K$.  

Suppose there exists a face $H_{\vc m}$ of $\K$ that is not a face of $\K'$, with $\vc m\in \E_1$.   We use $\CC$ to do a descent on $\vc m$:  Let $O\in \partial \H_E$ be the center of the hypersphere $H_{\vc m,E}\subset \partial \H_E$.  There exists an isometry in $\CC_E$ that moves $O$ into $\F_E$, so $O$ is inside one (or more) of the domes that bound $\F$ ($F_{10}$ and $F_{12}$ for $\rho=9$; $F_{11}$, $F_{12}$, and $F_{14}$ for $\rho=10$).  Note that this map does not change the curvature, as $\CC_E$ is a group of Euclidean isometries of $\H_E$.  Since $O$ is in one of the domes, applying the map associated to that dome strictly decreases the curvature.  We find the new $O$ (if  it exists) and continue.  Since the curvatures are integers, descent must stop, which is when the curvature reaches $0$ (so $O$ no longer exists).  Let $\vc m'$ be the final image of $\vc m$.  Then $\vc m'\odot E=0$ (curvature is $0$), so $H_{\vc m'}$ is either parallel to $H_{\vc e_\rho}$, or intersects it.   

If $H_{\vc m'}$ is parallel to $H_{\vc e_\rho}$, then it is also parallel to $H_{\vc e_{\rho-1}}$ and hence $H_{\vc m'}^-$ is a subset of either $H_{\vc e_\rho}^-$ or $H_{\vc e_{\rho-1}}^-$.  That means one of them is not a face of $\K$, which contradicts Lemma \ref{l5.1old}.      

If $H_{\vc m'}$ intersects $H_{\vc e_\rho}$, then $\vc m'\odot \vc e_\rho=0$, since this product is an integer in the interval $(-1,1)$ (see Equation (\ref{eq1})).  Thus
\[
0=\vc m'\odot \vc e_\rho=m_\rho'-\sum_{i=1}^{\rho-1}m_i',
\]
and
\begin{align*}
1=\vc m'\odot \vc m'&=\sum_{i=1}^\rho (m_i')^2-2\sum_{i < j} m_i'm_j' \\
&\equiv \left(\sum_{i=1}^\rho m_i\right)^2 \pmod 2 \\
&\equiv (\vc m'\odot \vc e_\rho)^2 \equiv 0 \pmod 2,
\end{align*}
a contradiction.  Thus, no such $\vc m$ exists and $\K'=\K$.  

This is also the key to showing Property (a):  Suppose two faces, $\vc m$ and $\vc m'\in \CC\vc e_\rho$ intersect.  Then $\vc m\odot \vc m'=0$, and there exists a $\cc\in \CC$ so that $\cc \vc m'=\vc e_\rho$.  Thus $\cc\vc m\odot \vc e_\rho=0$, leading us to a contradiction using the logic above.  

We next show $\A_{\rho,E}$ satisfies Property (b):  Suppose not.  Then there exists a gap where we can place a hypersphere.   That is, there exists $\vc n\in \Bbb R^{\rho-1,1}$ with $\vc n\odot \vc n>0$ so that $H_{\vc n}^+\subset \K$.  If $D\in H_{\vc n}^+$, then the hypersphere is very large, and by shrinking it (choosing a different $\vc n$), we can make sure $D\in H_{\vc n}^-$.  (The point $D$ lies in $\H$, so is a fixed Euclidean distance above $\partial \H_E$.  If the hypersphere in $\partial \H_E$ is sufficiently small, then the hemisphere above it cannot reach $D$.)  By Lemma \ref{l5.3}, there exists $\vc m\in \E_1$ so that $H_{\vc m}^+\subset H_{\vc n}^+\subset \K$.  We do the descent argument again to get $\vc m'$ with $\vc m'\odot E=0$.  Since $\CC$ is a group of symmetries of $\K$, we now have a gap in $\K$ that includes the half space $H_{\vc m'}^+$ in the Euclidean space $\partial \H$.   This cannot happen, as $\K$ lies between the parallel planes $H_{\vc e_\rho}$ and $H_{\vc e_{\rho-1}}$.  

Finally, Property (d) follows from Lemma \ref{l5.1old} and the transitivity of $\CC$ on the faces of $\K$.     
\end{proof}

\begin{remark}  Here is a perhaps dubious motivation for looking at $\phi_{P_1,P_2}$.  Note that if a hypersphere intersects a face $F_i$ of $\F_E$, then the reflection of it in $F_i$ is itself (if we want Property (b) to hold).  Since we want to cover $Q_5$, a strategy is to look for elements $\vc n\in \E_1$ on the intersection of all but one of the faces that go through $Q_5$ and are close to $Q_5$.  We stumble on $\vc f=(2,2,2,2,2,-2,-3,2,2)$.  We also note that $P_1$ is a cusp, so there is some motivation to make it the point at infinity.  With $P_1$ the point at infinity, we note that both $\vc e_9$ and $\vc f$ have curvature $1$, so we look at the $-1$ map through the midpoint $P_2$ of the two.  To find $P_2$, we find the center $R_{\vc e_9}(P_1)$ of $H_{\vc e_9,P_1}$ in $\partial \H_{P_1}$, and the center $R_{\vc f}(P_1)$ of $H_{\vc f,P_1}$.  The point $P_2$ is the sum of these two centers together with the appropriate multiple of $P_1$ that makes $P_2\odot P_2=0$.      
\end{remark}  

\begin{remark}  Descent arguments usually include a height function.  Our method of descent has two components: Moving $H_{\vc m}$ closer to $\F_E$ using the Euclidean metric on $\partial \H_E$, while keeping the curvature constant; and decreasing the curvature.  Thus, our height can be thought of as having two components:  Euclidean distance of $H_{\vc m}$ from (say) $Q_1=R_{\vc e_1}(E)$, which is essentially the quantity $-Q_1\odot \vc m$; and curvature, which is essentially $-E\odot \vc m$.  We can combine these two to give us a more traditional height:  $h(\vc m)=-(Q_1+3E)\odot \vc m$.  The coefficient $3$ is necessary so that the point $Q_1+3E$ is in $\F$ and in $\H$.  Otherwise, the decrease in curvature after an inversion might be less than the increase in distance.  This height is essentially just the logarithm of the hyperbolic distance from the point $Q_1+3E$ to the plane $H_{\vc m}$ in $\H$.  
\end{remark}

\section{The case $\rho=10$}

As in the previous case, we use the equations in Section \ref{sec34} to find the vertices of a prism $\P$, the points $Q_1$, ..., $Q_8$ and $Q_1'$, ..., $Q_8'$.  The dome $F_{11}=R_{\vc v_{19}}$ covers all but $Q_4$, $Q_5$, and $Q_6$.  The point $Q_5'$ is on $F_{11}$.  In our search for additional isometries, we find the reflection $R_{\vc n}$ for $\vc n=(1,1,1,1,1,-1,-1,-1,1,1)$ is in $\O_{\LL}^+$.  We let $F_{12}=H_{\vc n}$.  The hypersphere $H_{\vc n,E}$ in $\H_E$ is centered at $Q_5$ and goes through $Q_5'$.  The midpoint $P_1=(1,1,1,1,0,0,-1,-1,1,1)$ of $Q_4Q_6$ in $\H_E$ is on $F_{11}$ and $F_{12}$, so looks like a cusp.  The map $\phi_{P_1,E}$ is in $\O_{\LL}^+$.  We find $\vc f=(1,1,1,1,1,1,-1,-2,1,1)\in \E_1$ in roughly the same place as the $\rho=9$ case, and in the same way find $P_2=(1,1,1,1,2,2,-1,-3,1,3)$.  The map $\phi_{P_1,P_2}$ is also in $\O_{\LL}^+$.  

\begin{theorem}\label{t6.1}   The reflections through the faces $F_1$, ..., $F_{12}$ and the maps $\phi_{P_1,E}$ and $\phi_{P_1,P_2}$ generate a subgroup $G=G_{10}$ of finite index in $\O_{\LL_{10}}^+$.  
\end{theorem}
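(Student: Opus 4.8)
The plan is to repeat, essentially verbatim, the argument used for $\rho=9$: I will exhibit a fundamental domain $\F$ for $G$, show that $\F$ meets $\partial\H$ in only finitely many points, and conclude that $\F$ has finite hyperbolic volume, so that the discrete group $G$ has finite index in $\O_{\LL_{10}}^+$.

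First I would assemble the faces already produced in the paragraph preceding the statement. The prism $\P$ is cut out by the faces $F_1,\dots,F_{10}$ of Section~\ref{sec34}, with vertices $Q_1,\dots,Q_8$ and $Q_1',\dots,Q_8'$. Since $\phi_{P_1,E}\in\CC_E$ acts on $\partial\H_E$ as the $-1$ map centered at $P_1$, any hyperplane through $P_1$ and $E$ is invariant under $\phi_{P_1,E}$ while its two half-spaces are interchanged. I would therefore choose a normal $\vc n_1$ with $\vc n_1\odot P_1=\vc n_1\odot E=0$, set $F_{13}=H_{\vc n_1}$, and take $\F_E=\P\cap H_{\vc n_1}^-$; the symmetry $\phi_{P_1,E}$ then guarantees that it suffices to cover this one half of $\P$ by domes.

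Next I would cover $\F_E$. The dome $F_{11}=H_{\vc v_{19}}$ already covers every vertex of $\P$ except $Q_4$, $Q_5$, and $Q_6$ --- one more missed vertex than in the $\rho=9$ case, which is exactly why the extra reflection $F_{12}$ is needed. The dome $F_{12}=H_{\vc n}$, with $\vc n=(1,1,1,1,1,-1,-1,-1,1,1)$, is centered at $Q_5$ and passes through $Q_5'$, and I would check that it covers the entire piece of $\P$ lying outside $F_{11}$ near $Q_5$. Since $P_1$ is the midpoint of $Q_4Q_6$ and lies on $F_{11}$ (and on $F_{12}$), the part of $\P$ outside $F_{11}\cup F_{12}$ consists of two further pieces, one near $Q_4$ and one near $Q_6$, pinching together at the cusp $P_1$; the slice $F_{13}$ discards one of these, and the dome $F_{14}=H_{\vc n_2}$ coming from $\phi_{P_1,P_2}$ --- a hyperplane through $P_1$ and $P_2$, tangent to $F_{13}$ at $P_1$ in $\partial\H_E$ --- covers the other. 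Concretely, as in the $\rho=9$ case, I would locate, for each uncovered vertex $Q$, the points $P_{Qi}$ closest to $Q$ at which $F_{11}$ meets the edges of $\P$ emanating from $Q$, and then run the finite list of sign checks: that the discarded $Q_4$-piece lies in $H_{\vc n_1}^+$, that the $Q_5$-piece lies inside $F_{12}$, that the retained $Q_6$-piece and its points $P_{6i}$ lie in the covering half-space of $F_{14}$, and that $E\in H_{\vc n_2}^-$.

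The main obstacle is precisely this covering verification. Because $F_{11}$ now misses three vertices rather than two, the region left outside the main dome is more intricate, and the bookkeeping of the edge--dome intersection points $P_{Qi}$ and of their Lorentz products against $\vc n_1$, $\vc n$, and $\vc n_2$ is correspondingly heavier; in particular one must confirm that $F_{12}$, the genuinely new ingredient here, reaches far enough to cover the whole $Q_5$-piece. Once these finitely many checks are in place, the faces $F_1,\dots,F_{14}$ bound a polyhedron $\F$ whose only points on $\partial\H$ are the two cusps $E$ and $P_1$; hence $\F$ has finite volume and $G=G_{10}$ has finite index in $\O_{\LL_{10}}^+$.
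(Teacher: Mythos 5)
Your overall architecture matches the paper's: the prism $\P$ with vertices $Q_1,\dots,Q_8$ and $Q_1',\dots,Q_8'$, the slice $F_{13}=H_{\vc n_1}$ realizing $\phi_{P_1,E}$ so that $\F_E=\P\cap H_{\vc n_1}^-$, the three domes $F_{11}$, $F_{12}$, $F_{14}$, and the conclusion via the two cusps $E$ and $P_1$. The gap is in the covering verification. You model it on the $\rho=9$ case, where each leftover piece of the prism outside the main dome lies in a \emph{single} convex region (a half-space $H_{\vc n_1}^+$ or a single dome), so that checking the vertices $Q$ and the edge-intersection points $P_{Qi}$ suffices, since a convex polytope whose vertices lie in one ball lies in that ball. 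For $\rho=10$ that picture breaks down: the part of $\F_E$ not covered by $F_{11}$ near $Q_5$ and $Q_6$ is not contained in $F_{12}$ alone, nor in $F_{14}$ alone; it is covered only by the \emph{union} of the three domes. A union of balls is not convex, so vertex membership in the union does not imply the polytope lies in the union, and your proposed list of sign checks does not establish the covering.

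The paper therefore introduces and applies a finer criterion: a convex polytope is covered by two spheres if all its vertices and all its edges are covered, and by three spheres if in addition all its $2$-dimensional faces are covered. An edge $AB$ is tested by solving $P\odot F=0$ for $P\in\myspan\{A,B,E\}$ over the two relevant domes $F$ and checking $P\odot P\geq 0$ (similarly for a $2$-face $\{A,B,C\}$ with $P\in\myspan\{A,B,C,E\}$ and three domes). Concretely, the paper checks the edges $Q_5A$ against $F_{11}$ and $F_{12}$ jointly, the edges $Q_6A$ against $F_{11}$ and $F_{14}$ jointly, the edge $Q_5Q_6$ against $F_{12}$ and $F_{14}$, and every $2$-face spanned by $\{Q_5,Q_6,A\}$ against all three domes. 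Your proposal contains no analogue of these edge and $2$-face checks; without them the claimed covering of $\F_E$, and hence the finiteness of the volume of $\F$, is not established. This joint-covering criterion is precisely the new ingredient that distinguishes $\rho=10$ from $\rho=9$.
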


\begin{proof}  As in the $\rho=9$ case, we let $\P$ be the prism with vertices $Q_1$, ..., $Q_8$ and $Q_1'$, ..., $Q_8'$.  Because the map $\phi_{P_1,E}$ is in $\CC_E$, our choice for $\F_E$ will be a proper subset of $\P$.  We choose the face $F_{13}=H_{\vc n_1}$ where $\vc n_1=(1,1,1,1,-3,-3,1,1,1,1)$ to represent $\phi_{P_1,E}$.  Since $P_1$ and $E$ are both on $F_{13}$, it sends one side of the face to the other.  We let $\F_E$ be the intersection of $\P$ with $H_{\vc n_1}^-$.  This lops off the vertices $\{Q_2,Q_3,Q_4,Q_2',Q_3',Q_4'\}$.  The vertices $\{Q_1,Q_1',Q_5,Q_5'\}$ are on $F_{13}$.  Since $F_{13}$ is perpendicular to $H_{\vc e_{10}}$ and $H_{\vc v_{9,10}}$, the domain $\F_E$ is again a prism.  Let $P_{ij}$ be the intersection of the line $Q_iQ_j$ with $F_{13}$ for $i=6$, $7$, and $8$, and $j=2$, $3$, and $4$.  Define $P_{ij}'$ similarly for the line $Q_i'Q_j'$ and the same indices.  Then the vertices of $\F_E$ are the vertices $Q_1$, $Q_5$, $Q_6$, $Q_7$, $Q_8$, the nine vertices $P_{ij}$ just defined, and the `prime' versions of those fourteen points.  Most of $\F_E$ is covered by the dome $F_{11}$, as that dome includes all the vertices of $\F_E$ except for $Q_5$ and $Q_6$.

The dome $F_{12}$ covers the vertex $Q_5$, but not $Q_6$.  For $\phi_{P_1,P_2}$, we choose $\vc n_2=(3,3,3,3,3,3,-5,-5,3,3)$ and the face $F_{14}=H_{\vc n_2}$, which covers $Q_6$.   

A convex polyhedron is covered by a single sphere if all of its vertices are in the sphere.  We used this in the $\rho\leq 8$ cases.  It is covered by two spheres if all the vertices are covered, and all the edges are covered.  If two vertices are in the same sphere, then that edge is covered by that sphere, so one need check only the edges with vertices in different spheres.  A convex polyhedron is covered by three spheres if all its vertices and edges are covered, and also all its $2$-D edges are covered.  If three vertices that lie in two spheres define a 2-D edge, then it is covered if its edges are covered.  Thus we need only check 2-D edges with vertices in all three spheres.  This is our situation.

The edge $AB$ in $\partial \H_E$ is in the plane spanned by $\{A,B,E\}$ in $\Bbb R^{\rho-1,1}$.  To see if it is covered by the two domes that contain $A$ and $B$, we solve $P\odot F=0$ for $P\in \myspan\{A,B,E\}$ and the two domes $F$.  This gives us a one-dimensional subspace of $\Bbb R^{\rho-1,1}$ spanned by (say) $P$.  If $P\odot P> 0$, then the domes intersect above the segment $AB$ in the Poincar\'e model, so the spheres in $\partial H_E$ cover the edge.  If $P\odot P=0$, then the domes intersect at $P$ on the edge, so again cover it.  They do not cover the edge if $P\odot P<0$.  A similar argument works for a 2-D edge determined by vertices $A$, $B$ and $C$:  We let $P\in \myspan\{A,B,C,E\}$ and solve the three equations $F\odot P=0$ for the domes $F$ that cover each of the three points.  This gives a one-dimensional solution spanned by $P$, and again, if $P\odot P\geq0$, then the 2-D edge is covered by the three spheres, and does not otherwise.  

Let $S=\{Q_1,Q_7, Q_8,P_{62},P_{63}, P_{64}, P_{72}, P_{73}, P_{74}, P_{82}, P_{83}, P_{84}\}$ be the set of vertices that are on the top of the prism $\F_E$, and that are covered by $F_{11}$.  We check that the edges $Q_5A$ for $A\in S \cup \{Q_5'\}$  are all covered by $F_{11}$ and $F_{12}$.  We do not need to check, for example, the edge $Q_5Q_1'$, since it follows from our check of $Q_5Q_5'$ and $Q_5Q_1$, and noting that $\F_E$ is a prism.  We similarly check that the edges $Q_6A$ for $A\in S\cup \{Q_6'\}$ are covered by $F_{11}$ and $F_{14}$; and that $Q_5Q_6$ is covered by $F_{12}$ and $F_{14}$.  Finally we check that all the 2-D edges spanned by $\{Q_5,Q_6,A\}$ for $A\in S\cup \{Q_5',Q_6'\}$ are covered by $F_{11}$, $F_{12}$ and $F_{14}$.  We find that they all are covered by $F_{11}$, $F_{12}$, and $F_{14}$, so $\F$ has finite volume and thus $G$ has finite index in $\O^+$. 
\end{proof}

\ignore{
The midpoint of $Q_4Q_5$ (in $\partial \H_E$) is $P=(1,1,1,0,-1,-1,1,1)$, which lies on $H_{\vc v_{18}}$.  Thus the portion of the prism outside the dome $F_{10}$ is composed of two parts, one that includes $Q_4$ and the other that includes $Q_5$.  The edges of the prism that include $Q_4$ are the segments $Q_4Q_i$ for $i\neq 4$, and $Q_4Q_4'$.  Let $P_{4i}$ be the intersection of the dome $H_{\vc v_{18}}$ with $Q_4Q_i'$ for $i\neq 4$, and $Q_4Q_4'$ for $i=4$.  Then the polytope with vertices $Q_4$ and $R_{4i}$ for all $i$ covers the portion of the prism outside the dome $H_{\vc v_{18}}$.  We construct a similar polytope for $Q_5$.

That $P$ lies on $F_{10}$ suggests that $P$ is a cusp of $\F$, and though it has sides in all but one dimension, there do not seem to be any reflections that close it off.  However, $\phi_{P,E}\in \O_{\LL}^+$ (see Equation \ref{ephi}).  This involution sends any half-space in $\H$ that contains $P$ and $E$ to its complement, so we may choose as a new boundary of $\F$ any such bounding plane $H_{\vc n_1}$.  Let us choose the plane that is perpendicular to the faces of $\F$ that go through $P$.  This gives $\vc n=(1,1,1,1,-6,1,1,1,1)$.  The half-space $H_{\vc n}^+$ contains all vertices of the polytope that contains $Q_4$ and is described above.  The edges of the prism that include $Q_5$ are $Q_5Q_i$ for $i\neq 5$ and $Q_5Q_5'$.  They intersect $H_{\vc v_{18}}$ at seven points, all of which lie in $H_{\vc n}$.  Thus 
}
\ignore{
\subsection{The Apollonian circle packing again ($\rho=4$)}  
The group $G_{\{\vc e_3,\vc e_4\}}$ is generated by $R_{\vc s_2}$ and $R_{\vc v_{12}}$, giving us the left and right faces denoted by $\vc s_2$ and $\vc v_{12}$ in \fref{fig1}.    That is, $\F_{\{\vc e_3,\vc e_4\}}$ is a line segment.  Note that $\vc s_2\odot \vc e_i=0$ for $i\neq 2$ (since $\vc s_2$ is perpendicular to $\vc e_i$ for $i \neq 2$).  Solving, we get $\vc s_2=(1,-1,1,1)$, up to scalar multiples.  We choose the orientation so that $H_{\vc s_2}^-$ includes $H_{\vc v{12}}$. 
Note that $R_{\vc s_2}$ has integer entries, so $R_{\vc s_2}\in \O_{\LL}^+$.  Orienting $\vc v_{12}$ so that $H_{\vc v_{12}}^-$ includes $H_{\vc s_2}$, we get $\vc v_{12}=(-1,1,0,0)$.  Note that the $R_{\vc v_{12}}(\vc x)=(x_2,x_1,x_3,x_4)$, which is why we call it a transposition:  It just switches the first and second components.     

We get the {\it prism} $\F_E$ by extending the segment $\F_{\{\vc e_3,\vc e_4\}}$ by a dimension and bounding it by the faces $\vc v_{34}$ and $\vc e_4$.  Since $\vc e_4\odot \vc e_4=1$, $R_{\vc e_4}\in \O_{\LL}^+$  (see Equation (\ref{eq2})).  

Above the square $\F_E$ is what we will call the {\it chimney}.  We bound it below by the face $\vc v_{13}$, which is a hemisphere in the Poincar\'e model.  The portion of the chimney above the hemisphere is $\F$.  It has finite volume and the single cusp at $E$.  The group $G$ is generated by the reflections through each face, and the group $\CC$ is generated by the same reflections except for $R_{\vc e_4}$.  Then $\K=\CC\F$.  

Let $S=\{\vc s_2, \vc v_{12}, \vc v_{34}, \vc e_4, \vc v_{13}\}$ be the vectors that give the faces of $\F$.  Then
\[
\F=\bigcap_{\vc n\in S}H_{\vc n}^-.
\]

\subsection{The Apollonian-Soddy sphere packing ($\rho=5$)}
 We select for $G_{\{\vc e_4,e_5\}}$ the fundamental domain $\F_{\{\vc e_4,\vc e_5\}}$ that is the triangle bounded by $\vc v_{12}, \vc v_{23}$ and $\vc u$ shown in \fref{fig2}.     

\myfig{fig2}{The cross section $H_{\vc v_{45}}$ of the strip version of the Apollonian sphere packing with $E=\vc e_4+\vc e_5$ the point at infinity.  Each circle represents a sphere, and the larger spheres are bounded above and below by the planes $H_{\vc e_4}$ and $H_{\vc e_5}$. }    

The vectors $\vc v_{12}=\vc e_2-\vc e_1$ and $\vc v_{23}=\vc e_3-\vc e_2$

For $\rho=4$, the vector $\vc s_2=(1,-1,1,1)$ gives a \Viete involution, so $\vc s_3=(1,1,-1,1,1)$ is a natural generalization for $\rho=5$.   But an alternative generalization is $\vc u=(1,0,-1,1,1)=\vc e_1-\vc e_3+E$, and more generally, $\vc u=\vc e_1-\vc e_{\rho-2}+E$.  
}
\ignore{
we choose $G_E=\langle R_{\vc e_4}, R_{\vc v_{12}}, R_{\vc v_{34}}, R_{\vc s_2}\rangle$, which gives $\F_E$ the square with sides

For $\rho=4$ (the Apollonian circle packing), the group $G=\O_{\LL}^+=\langle \CC, R_{\vc e_4}\rangle$.  Using the planes of reflection for the faces of $\F$, we get

4=\langle R_{\vc e_4}, R_{\vc s_2}, R_{\vc v_{34}}, R_{\vc v_{12}}, R_{\vc v_{13}}\rangle$.  Referring to \fref{fig1}, the faces of $\F$ are given by $\vc v_{12}=\vc e_2-\vc e_1$,  $\vc v_{34}=\vc e_3-\vc e_4$, 

\section{Background}

\subsection{The Apollonian circle packing}
Let us briefly review the Apollonian circle packing in two dimensions.  There are many versions, one of which is the strip packing shown in \fref{fig1}.  There are a number of symmetries of this packing, including reflection in the dotted lines and inversion in the dotted circle shown in \fref{fig1}.  The group $\CC$ generated by these four symmetries generates the full group of symmetries of the packing, and the packing can be thought of as the image of the bottom line (labeled $\vc e_4$) under the action of $\CC$.


We can think of the packing as existing on the boundary of the Poincar\'e upper half space model of $\Bbb H^3$, where each circle or line (symmetry or element of the packing) can be thought of as the boundary at infinity of a plane in $\Bbb H^3$.  Hyperbolic space, $\Bbb H^3$, can be modeled by the pseudosphere $\H$ in Lorentz space $\Bbb R^{1,3}$.  In this model, $\H$ is one of the two sheets given by $\vc x\odot \vc x=1$, where the Lorentz product $\odot$ is a bilinear symmetric form with one positive eigenvalue and three negative eigenvalues (the signature $(1,3)$ in the superscript $\Bbb R^{1,3}$).  Planes in the hyperbolic space are the intersection of $\H$ with planes through the origin, $\vc n \odot \vc x=0$, with normal vector $\vc n$ satisfying $\vc n\odot \vc n<0$.  Details of this model appear in \cite{Bar17}.  Every circle can therefore be represented by a vector $\vc n$ (up to linear multiples).  (We take the view that lines are circles that go through infinity.)  

Let us choose the circles $\vc e_1,...,\vc e_4$ to be a potential basis for $\Bbb R^{1,3}$.   We normalize them so that $\vc e_i\odot \vc e_i=-1$.  The tangency conditions, together with appropriate orientations, imply $\vc e_i\odot \vc e_j=1$ for $i\neq j$.  (Again, we refer the reader to \cite{Bar17} for more details.)  Let $J_4=[\vc e_i\odot \vc e_j]$.  Since $\det J\neq 0$, the vectors are linearly independent, so indeed form a basis, and $J$ represents the Lorentz product in this basis.  We define the lattice $\LL_4=\vc e_1\Bbb Z \oplus ... \oplus \vc e_4\Bbb Z$.  Let $\O^+$ be the group of isometries in $\H$ that preserve the lattice $\LL_4$.  It is generated by the symmetries mentioned earlier, which now can be thought of as reflections in their respective planes in hyperbolic space, and reflection in the plane given by $\vc e_4$.  A fundamental domain for $\O^+$ is shown in \ref{fig2}.


Any Apollonian packing can be obtained from the strip packing via an inversion.  In $\Bbb H^3$, this corresponds to reflection in a plane, so yields a symmetrical object.  In the Poincar\'e upper half space model, though, the reflection changes our point at infinity.  We will therefore consider such a variation a different perspectives of the same object.  If the choice of point at infinity is a point in the lattice $\LL_4$, then all circles in that perspective have integer curvatures.  \cite{Bar**}

\subsection{Soddy's sphere packing}

\section{The group of symmetries}

}

\begin{bibdiv}

\begin{biblist}

\ignore{\bib{A-V93}{article}{
   author={Alekseevskij, D. V.},
   author={Vinberg, \`E. B.},
   author={Solodovnikov, A. S.},
   title={Geometry of spaces of constant curvature},
   conference={
      title={Geometry, II},
   },
   book={
      series={Encyclopaedia Math. Sci.},
      volume={29},
      publisher={Springer, Berlin},
   },
   date={1993},
   pages={1--138},
   review={\MR{1254932}},
   doi={10.1007/978-3-662-02901-5\_1},
}

\bib{Apa00}{book}{
   author={Apanasov, Boris N.},
   title={Conformal geometry of discrete groups and manifolds},
   series={De Gruyter Expositions in Mathematics},
   volume={32},
   publisher={Walter de Gruyter \& Co., Berlin},
   date={2000},
   pages={xiv+523},
   isbn={3-11-014404-2},
   review={\MR{1800993}},
   doi={10.1515/9783110808056},
}
	
\bib{Bar01}{book}{
   author={Baragar, Arthur},
   title={A Survey of Classical and Modern Geometries},
   publisher={Prentice Hall},
   date={2001},
   address={Upper Saddle River, NJ},
   pages={xiv+370},
   isbn={0-13-014318-9},
}

\bib{Bar11}{article}{
   author={Baragar, Arthur},
   title={The ample cone for a $K3$ surface},
   journal={Canad. J. Math.},
   volume={63},
   date={2011},
   number={3},
   pages={481--499},
   issn={0008-414X},
   review={\MR{2828530 (2012f:14071)}},
   doi={10.4153/CJM-2011-006-7},
}

\bib{Bar17}{article}{
  author={Baragar, Arthur},
  title={The Apollonian circle packing and ample cones for K3 surfaces},
  eprint={arXiv:1708.06061},
  status={to appear},
  year={2017},
}}

\bib{Bar17b}{article}{
  author={Baragar, Arthur},
  title={The Neron-Tate pairing and elliptic K3 surfaces},
  eprint={arXiv:1708.05998},
  status={to appear},
  year={2017},
}

\bib{Bar18}{article}{
   author={Baragar, Arthur},
   title={Higher dimensional Apollonian packings, revisited},
   journal={Geom. Dedicata},
   volume={195},
   date={2018},
   pages={137--161},
   issn={0046-5755},
   review={\MR{3820499}},
   doi={10.1007/s10711-017-0280-7},
}

\ignore{
\bib{Boy73}{article}{
   author={Boyd, David W.},
   title={The osculatory packing of a three dimensional sphere},
   journal={Canad. J. Math.},
   volume={25},
   date={1973},
   pages={303--322},
   issn={0008-414X},
   review={\MR{0320897}},
   doi={10.4153/CJM-1973-030-5},
}}

\bib{Boy74}{article}{
   author={Boyd, David W.},
   title={A new class of infinite sphere packings},
   journal={Pacific J. Math.},
   volume={50},
   date={1974},
   pages={383--398},
   issn={0030-8730},
   review={\MR{0350626}},
}

\ignore{
\bib{Boy82}{article}{
   author={Boyd, David W.},
   title={The sequence of radii of the Apollonian packing},
   journal={Math. Comp.},
   volume={39},
   date={1982},
   number={159},
   pages={249--254},
   issn={0025-5718},
   review={\MR{658230}},
   doi={10.2307/2007636},
}

\bib{Cli68}{article}{
  author={Clifford, W.H.},
  title={On the powers of spheres (1868)},
  booktitle={Mathematical papers},
  year={1882},
  publisher={Macmillan},
  address={London},
}

\bib{Dar72}{article}{
   author={Darboux, Gaston},
   title={Sur les relations entre les groupes de points, de cercles et de
   sph\`eres dans le plan et dans l'espace},
   language={French},
   journal={Ann. Sci. \'Ecole Norm. Sup. (2)},
   volume={1},
   date={1872},
   pages={323--392},
   issn={0012-9593},
   review={\MR{1508589}},
}

\bib{Dol16}{article}{
  author={Dolgachev, Igor},
  title={Orbital counting of curves on algebraic surfaces and sphere packings},
  booktitle={K3 surfaces and their moduli},
  year={2016},
  publisher={Springer International Publishing},
  address={Cham},
  pages={17--53},
  isbn={978-3-319-29959-4},
  doi={10.1007/978-3-319-29959-4\_2},
  url={http://dx.doi.org/10.1007/978-3-319-29959-4_2},
}}

\bib{GLM06}{article}{
   author={Graham, Ronald L.},
   author={Lagarias, Jeffrey C.},
   author={Mallows, Colin L.},
   author={Wilks, Allan R.},
   author={Yan, Catherine H.},
   title={Apollonian circle packings: geometry and group theory. II.
   Super-Apollonian group and integral packings},
   journal={Discrete Comput. Geom.},
   volume={35},
   date={2006},
   number={1},
   pages={1--36},
   issn={0179-5376},
   review={\MR{2183489}},
   doi={10.1007/s00454-005-1195-x},
}

\ignore{
\bib{G-M10}{article}{
   author={Guettler, Gerhard},
   author={Mallows, Colin},
   title={A generalization of Apollonian packing of circles},
   journal={J. Comb.},
   volume={1},
   date={2010},
   number={1, [ISSN 1097-959X on cover]},
   pages={1--27},
   issn={2156-3527},
   review={\MR{2675919}},
   doi={10.4310/JOC.2010.v1.n1.a1},
}
	
\bib{Kov94}{article}{
   author={Kov{\'a}cs, S{\'a}ndor J.},
   title={The cone of curves of a $K3$ surface},
   journal={Math. Ann.},
   volume={300},
   date={1994},
   number={4},
   pages={681--691},
   issn={0025-5831},
   review={\MR{1314742 (96a:14044)}},
   doi={10.1007/BF01450509},
}}

\bib{LMW02}{article}{
   author={Lagarias, Jeffrey C.},
   author={Mallows, Colin L.},
   author={Wilks, Allan R.},
   title={Beyond the Descartes circle theorem},
   journal={Amer. Math. Monthly},
   volume={109},
   date={2002},
   number={4},
   pages={338--361},
   issn={0002-9890},
   review={\MR{1903421}},
   doi={10.2307/2695498},
}
\bib{Max81}{article}{
   author={Maxwell, George},
   title={Space groups of Coxeter type},
   booktitle={Proceedings of the Conference on Kristallographische Gruppen
   (Univ. Bielefeld, Bielefeld, 1979), Part II},
   journal={Match},
   number={10},
   date={1981},
   pages={65--76},
   issn={0340-6253},
   review={\MR{620801}},
}

\ignore{
\bib{McM98}{article}{
   author={McMullen, Curtis T.},
   title={Hausdorff dimension and conformal dynamics. III. Computation of
   dimension},
   journal={Amer. J. Math.},
   volume={120},
   date={1998},
   number={4},
   pages={691--721},
   issn={0002-9327},
   review={\MR{1637951}},
}}

\bib{Mor84}{article}{
   author={Morrison, D. R.},
   title={On $K3$ surfaces with large Picard number},
   journal={Invent. Math.},
   volume={75},
   date={1984},
   number={1},
   pages={105--121},
   issn={0020-9910},
   review={\MR{728142}},
   doi={10.1007/BF01403093},
}

\bib{Rat06}{book}{
   author={Ratcliffe, John G.},
   title={Foundations of hyperbolic manifolds},
   series={Graduate Texts in Mathematics},
   volume={149},
   edition={2},
   publisher={Springer, New York},
   date={2006},
   pages={xii+779},
   isbn={978-0387-33197-3},
   isbn={0-387-33197-2},
   review={\MR{2249478}},
}

\ignore{
\bib{Sod37}{article}{
  author={Soddy, Frederick},
  title={The bowl of integers and the hexlet},
  journal={Nature},
  volume={139},
  year={1937-01-09},
  pages={77--79},
  doi={10.1038/139077a0}
}

\bib{Sul84}{article}{
   author={Sullivan, Dennis},
   title={Entropy, Hausdorff measures old and new, and limit sets of
   geometrically finite Kleinian groups},
   journal={Acta Math.},
   volume={153},
   date={1984},
   number={3-4},
   pages={259--277},
   issn={0001-5962},
   review={\MR{766265}},
   doi={10.1007/BF02392379},
}

\bib{V-S93}{article}{
   author={Vinberg, \`E. B.},
   author={Shvartsman, O. V.},
   title={Discrete groups of motions of spaces of constant curvature},
   conference={
      title={Geometry, II},
   },
   book={
      series={Encyclopaedia Math. Sci.},
      volume={29},
      publisher={Springer, Berlin},
   },
   date={1993},
   pages={139--248},
   review={\MR{1254933}},
   doi={10.1007/978-3-662-02901-5\_2},
}	}	

\end{biblist}
\end{bibdiv}

\end{document}